\newtheorem*{theorem*}{Theorem}
\newtheorem{theorem}{Theorem}
\newtheorem{lemma}{Lemma}
\newtheorem{corollary}{Corollary}
\newtheorem{conjecture}{Conjecture}
\newtheorem{definition}{Definition}
\theoremstyle{remark}
\newtheorem{remark}{Remark}
\numberwithin{equation}{section}
\numberwithin{corollary}{theorem}
\numberwithin{remark}{section}
\newcommand{\Z}{\mathbb{Z}}
\newcommand{\C}{\mathbb{C}}
\newcommand{\R}{\mathbb{R}}
\newcommand{\X}{\mathfrak{X}}
\newcommand{\h}{\mathcal{H}}
\newcommand{\sym}{\mbox{Sym}}
\begin{document}
\title{A Note on Holomorphic Quantum Unique Ergodicity}

\author{Krishnarjun Krishnamoorthy}
\email[Krishnarjun K]{krishnarjunk@hri.res.in, krishnarjunmaths@gmail.com}
\address{Harish-Chandra Research 
	Institute, HBNI,
	Chhatnag Road, Jhunsi, Prayagraj 211019, India.}

\keywords{Quantum Unique Ergodicity, Holomorphic Integral weight modular forms.}
\subjclass[2020]{Primary : 81Q50, 58J51. Secondary:11F30, 37D05}
\maketitle

\begin{abstract}
	In this paper we give a new proof of the Quantum Unique Ergodicity conjecture for holomorphic integral weight modular forms on the upper half plane. The proof requires only partial results towards the Ramanujan conjecture and the shifted convolution problem. Furthermore the proof is applicable to a wider class of cusp forms other than Hecke eigenforms. We also prove some corollaries, particularly towards the Lehmer's conjecture on the non vanishing of the Fourier coefficients. 
\end{abstract}

\section{Introduction}\label{Section "Introduction"}

Suppose $ \h :=\{z=x+iy\in \C\ |\ y >0\} $ denotes the upper half plane. The distribution of $ L^2 $ mass of certain smooth functions on the hyperbolic plane $ \mathfrak{X}:=\h/SL(2,\Z) $ is a very interesting problem both from the perspective of number theory and from the perspective of quantum chaos. In this direction, the following conjecture of Rudnick and Sarnak \cite{Rudnick-Sarnak QUE}, known as the quantum unique ergodicity (QUE) conjecture asserts that the $ L^2 $ masses of cuspidal eigen functions of the hyperbolic laplacian $ \Delta $ equidistribute as the ``energy" (captured by the size of eigenvalue) goes to infinity. This conjecture (now a theorem) has interpretations in connection to quantum mechanics.

\begin{conjecture}[QUE for $ \X $]\label{Conjecutre "QUE General"}
	Let $ \phi_i $ be a sequence of $ L^2 $ normalized eigenfunctions of the Laplacian $ \Delta $ on $ \X $ with associated eigenvalue $ \lambda_i\to\infty $. Then the probability measure $ d_iz = |\phi_i|^2 dz $ converges in the weak - * sense to the normalized measure $ (3/\pi) dz $.
\end{conjecture}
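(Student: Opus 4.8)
The plan is to establish the (arithmetic) case of Conjecture~\ref{Conjecutre "QUE General"} by the analytic route, testing the measures $d_iz=|\phi_i|^2\,dz$ against a spanning family of observables and reducing the resulting periods to bounds on shifted convolution sums of Hecke eigenvalues. Since the hyperbolic Laplacian commutes with the full Hecke algebra on $\X$, each eigenspace decomposes into joint eigenfunctions, so I may assume throughout that every $\phi_i$ is a Hecke--Maass eigenform with spectral parameter $t_i$, where $\lambda_i=\tfrac14+t_i^2\to\infty$. Weak-$*$ convergence to $\tfrac{3}{\pi}\,dz$ need only be checked on a set of test functions spanning a dense subspace; by the spectral decomposition of $L^2(\X)$ it suffices to treat (i) a fixed Hecke--Maass cusp form $\psi$, for which I must show $\int_{\X}|\phi_i|^2\,\psi\,dz\to 0$, and (ii) incomplete Eisenstein series $E_h(z)=\sum_{\gamma\in\Gamma_\infty\backslash\Gamma}h(\mathrm{Im}\,\gamma z)$, for which I must show $\int_{\X}|\phi_i|^2\,E_h\,dz\to\tfrac{3}{\pi}\int_{\X}E_h\,dz$. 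In addition I must rule out escape of $L^2$ mass into the cusp, so that the weak-$*$ limit remains a probability measure.

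For the cuspidal observables (i) I would expand $\psi$ into its incomplete-Poincar\'e constituents and unfold the period against the Fourier--Whittaker expansion of $|\phi_i|^2$. This converts $\int_{\X}|\phi_i|^2\,\psi\,dz$ into a weighted shifted convolution sum $\sum_{n\geq1}\lambda_i(n)\,\overline{\lambda_i(n+\ell)}\,\lambda_\psi(\ell)\,W(n,\ell;t_i)$, where $\lambda_i,\lambda_\psi$ are normalized Hecke eigenvalues and $W$ is an archimedean transform built from products of $K$-Bessel functions. The two ingredients advertised in the abstract enter here: a partial bound toward the Ramanujan conjecture, $\lambda_i(n)\ll_\varepsilon n^{\theta+\varepsilon}$ with $\theta<\tfrac12$, controls the diagonal and large-$n$ tails, while any nontrivial estimate for the off-diagonal shifted convolution---obtained by opening $\lambda_i(n)\overline{\lambda_i(n+\ell)}$ and applying the spectral (or $\delta$-method) treatment of the resulting shifted sum---yields a saving of a positive power of $t_i$. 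Equivalently, one may square the period and invoke Watson's triple-product identity, which expresses $\bigl|\int_{\X}|\phi_i|^2\psi\,dz\bigr|^2$ in terms of the central value $L(\tfrac12,\psi)\,L(\tfrac12,\psi\times\sym^2\phi_i)$ divided by $L(1,\sym^2\phi_i)^2$; here Hoffstein--Lockhart bounds the denominator from below, and a subconvex bound in the $t_i$-aspect for the degree-six factor $L(\tfrac12,\psi\times\sym^2\phi_i)$ forces the quotient to zero. The convexity bound alone is just barely insufficient, which is precisely why only \emph{partial} progress toward Ramanujan and a \emph{nontrivial} (not optimal) shifted-convolution bound are required, in the spirit of Luo--Sarnak, Holowinsky, and Soundararajan.

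For the continuous observables (ii) I would unfold $\int_{\X}|\phi_i|^2\,E(z,s)\,dz$ into the Rankin--Selberg integral, producing $\frac{\zeta(s)\,L(s,\sym^2\phi_i)}{\zeta(2s)}$ up to archimedean factors. The simple pole at $s=1$ supplies the main term, whose residue reproduces exactly the normalizing constant $\tfrac{3}{\pi}$; shifting the contour to $\Re s=\tfrac12$ and bounding the remaining integral via standard convexity/subconvexity for $\zeta$ and for $L(\cdot,\sym^2\phi_i)$ shows the error tends to $0$. Finally, non-escape of mass is handled by the Luo--Sarnak/Soundararajan estimate $\int_{\mathrm{Im}\,z>T}|\phi_i|^2\,dz\ll T^{-1+\varepsilon}$ uniformly in $i$, which guarantees tightness of the family and legitimizes testing against the unbounded incomplete Eisenstein series.

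The main obstacle is the uniform-in-$t_i$ control of the shifted convolution sum in step (i)---equivalently, subconvexity for $L(\tfrac12,\psi\times\sym^2\phi_i)$ in the eigenvalue aspect---since this is the only place where the trivial estimate fails and a genuine power saving is needed; the Eisenstein contribution and escape of mass, by contrast, are secondary and rest on now-standard analytic input. I note that the fully general (non-Hecke) form of the conjecture lies beyond this analytic method; for non-arithmetic eigenfunctions one instead appeals to Lindenstrauss's measure-classification theorem, using Hecke recurrence and positive entropy of the ergodic components of any quantum limit together with Soundararajan's non-escape of mass.
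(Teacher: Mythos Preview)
The paper does not prove Conjecture~\ref{Conjecutre "QUE General"} at all. It is stated purely as background: the Maass form case is attributed to Lindenstrauss \cite{Lindenstrauss} and Soundararajan \cite{Sound Escape of Mass}, and the paper's own contribution is the \emph{holomorphic} analogue, Theorem~\ref{Theorem "QUE holomorphic"}. So there is no ``paper's own proof'' of this statement for your proposal to be compared against; you have targeted the wrong result.

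Even setting that aside, your sketch does not constitute a proof of the Maass form conjecture. The crux of your step (i) is a subconvex bound for $L(\tfrac12,\psi\times\sym^2\phi_i)$ in the spectral aspect, or equivalently a power saving in the relevant shifted convolution sum uniformly in $t_i$; you correctly flag this as ``the main obstacle'' but then do not resolve it. Such a bound is not known for Maass forms, which is precisely why the actual proof of Conjecture~\ref{Conjecutre "QUE General"} proceeds via Lindenstrauss's measure-rigidity argument (positive entropy plus Hecke recurrence) combined with Soundararajan's ruling out of escape of mass, not via the analytic route you outline. Your final paragraph is also confused: Lindenstrauss's method \emph{requires} the Hecke symmetry---it is not a fallback for ``non-Hecke'' eigenfunctions, for which the conjecture remains open.

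If your intent was to match the paper, you should be proving Theorem~\ref{Theorem "QUE holomorphic"}. There the paper's method is quite different from the spectral/triple-product approach you describe (which is essentially Holowinsky--Soundararajan and is explicitly contrasted with in the introduction). The paper tests $\mu_k$ against characteristic functions of rectangles rather than Maass cusp forms and Eisenstein series, separates the problem into ``horizontal'' equidistribution (Theorem~\ref{Theorem "QUE Holomorphic Horizontal"}, handled by direct Fourier expansion, incomplete Gamma function asymptotics, and a truncation of the off-diagonal shifted sums that needs only a weak Ramanujan-type input) and ``vertical'' equidistribution (Theorem~\ref{Theorem "QUE Holomorphic Vertical"}, deduced from horizontal equidistribution via the modularity relation $I_k'(\Im(\gamma z))=(c^2T^2+d^2)^2 I_k'(T)+o(1)$ and a density argument with prime quotients). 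Watson's formula, Rankin--Selberg unfolding against $E(z,s)$, and subconvexity do not appear in the paper's argument.
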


Conjecture \ref{Conjecutre "QUE General"} is expected to hold in much more generality, in particular for (strictly) negatively curved compact Riemann surfaces. The QUE conjecture for Maass forms on $ \mathfrak{X} $ was settled by Soundararajan \cite{Sound Escape of Mass} who built upon earlier works of Lindenstrauss \cite{Lindenstrauss} (see also \cite{Einsiedler notes}, \cite[Chapter 9]{Bergeron - Book} for more details). The analogue of QUE conjecture for holomorphic modular forms was resolved by Holowinsky and Soundararajan \cite{Holowinsky - Sound} using methods which are almost solely number theoretic. Their proof relied on some delicate analysis of mean values of arithmetic functions and many other results which are quite specific for this case such as the Watson's formula \cite{Watson - Dissertation}, the Ramanujan conjecture and a recent weak subconvexity result of Soundararajan \cite{Sound Weak Subconvexity}.

Even though QUE conjecture is now completely known for $ \mathfrak{X} $ (see \cite{Einsiedler notes}, \cite{Holowinsky} and \cite{Sound Escape of Mass} for the Maass form case) the known proofs crucially use the arithmetic nature of the surface $ \X $, and therefore a proof of Conjecture \ref{Conjecutre "QUE General"} in its complete generality is quite far at the moment. For example, the multiplicative nature of the Fourier coefficients are indispensable for the existing proofs. Hence it is unclear how to generalize the existing proofs, for example, when there is no action of a Hecke ring.

Therefore it is important to find proofs which makes a minimal use of the arithmetic of the surface $ \mathfrak{X} $, is amenable to generalizations for other situations, and hopefully shall one day lead to a complete proof of Conjecture \ref{Conjecutre "QUE General"}. The purpose of this paper is to provide such a proof. As we shall see, the present proof very minimally requires the multiplicativity of the Fourier coefficients and also does not require the full strength of the Ramanujan conjecture. Furthermore, the second half of the proof of Theorem \ref{Theorem "QUE holomorphic"}, i.e. Theorem \ref{Theorem "QUE Holomorphic Vertical"} is applicable for any modular form (including Maass forms and even half integral weight forms). However, it is unclear how to generalize the proof of Theorem \ref{Theorem "QUE Holomorphic Horizontal"} to Maass forms on $ \mathfrak{X} $ at the present moment. This lack of clarity is due to the oscillating behavior of the $ K $ - Bessel function as opposed to the incomplete $ \Gamma $ function whose behavior is somewhat simpler. Nevertheless the author believes that a modification of the present proof which accommodates the non-holomorphic case as well is not far away in the future (see \S \ref{Section "Description of the method"} and Remark \ref{Remark "Maass forms"}).

We proceed to describe the main results of this paper. Some standard references for the theory of integral and half integral weight modular forms are \cite{Cohen - Book} and \cite{Ono - Web of modularity}. We shall follow the notations of the following paragraphs throughout the paper.

First we shall consider holomorphic cuspidal modular forms the full modular group $ SL(2,\Z) $. For simplicity of exposition, we shall restrict ourselves to Hecke eigenforms of full level. Unless otherwise mentioned, every cusp form will be assumed to be of full level. For an even integer $ k $ we let $ f_k $ be a cuspidal Hecke eigenform of weight $ k $ for $ SL(2,\Z) $. We shall denote the $ n $th Fourier coefficient of $ f_k $ by $ a_k(n) $ and normalize $ f_k $ so that $ a_k(1)=1 $. Suppose that $ \psi $ is a smooth compactly supported test function on $ \mathfrak{X} $. For any $ k $ and a choice of $ f_k $ as above, denote by $ \mu_k $ the measure defined by
\begin{equation}\label{Equation "Measure definition holomorphic"}
	\mu_k(\psi) = \frac{1}{\|f_k\|^2} \int\limits_{\mathfrak{X}} y^k |f_k(z)|^2 \psi(z) \frac{dxdy}{y^2},
\end{equation}
where $ \|f_k\|^2 $ denotes the $ L^2 $ mass (Petersson Norm) of $ f_k $ and $ \psi\in C_c^\infty(\mathfrak{X}) $.

\begin{theorem}\label{Theorem "QUE holomorphic"}
	If $ \mu_k $ is as above and $ \psi\in C_c^\infty (\mathfrak{X}) $, then 
	\begin{equation}\label{Equation "QUE holomorphic theorem"}
		\lim_{k\to\infty} \mu_k(\psi) = \frac{3}{\pi} \int\limits_{\mathfrak{X}} \psi(z)\frac{dxdy}{y^2}.
	\end{equation}
	The limit holds independent of choice of $ f_k $'s and the rate of convergence depends on the support of $ \psi $, more precisely on the quantity $ \inf\{\Im(z)\ |\ z\in\mbox{Supp}(\psi)\} $.
\end{theorem}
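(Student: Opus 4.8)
\emph{Strategy.} The plan is to unfold the integral \eqref{Equation "Measure definition holomorphic"} against the Fourier expansion $f_k(z)=\sum_{n\ge1}a_k(n)e(nz)$ and to establish \eqref{Equation "QUE holomorphic theorem"} in two stages, corresponding to Theorems \ref{Theorem "QUE Holomorphic Horizontal"} and \ref{Theorem "QUE Holomorphic Vertical"}: a \emph{horizontal} stage, in which an arbitrary $\psi$ is replaced by its averages along the closed horocycles $x\mapsto x+iy$, and a \emph{vertical} stage, in which one proves equidistribution of those horocyclic averages along the geodesic ray $y\to\infty$.

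\emph{Horizontal reduction.} Since $\psi$ is $\Gamma$-invariant and supported in $\{\Im z\ge A\}$, for $A\ge1$ its intersection with the standard fundamental domain is the box $\{|x|\le\tfrac12,\ y\ge A\}$, on which $\psi(\cdot+iy)$ is a smooth $1$-periodic function and hence has a rapidly decaying Fourier expansion $\psi(x+iy)=\sum_{\ell\in\Z}\widehat\psi_\ell(y)e(\ell x)$. (If $A<1$ the complementary piece $A\le y\le1$ has bounded volume and is disposed of with a sup-norm bound for $y^{k/2}f_k$; it is not the main point.) Expanding $|f_k|^2$ and integrating in $x$ imposes the conditions $m-n=\ell$, so
\[
\mu_k(\psi)=\frac{1}{\|f_k\|^2}\sum_{\ell\in\Z}\ \sum_{\substack{m,n\ge1\\ m-n=\ell}}a_k(m)a_k(n)\int_A^\infty y^{k-2}e^{-2\pi(m+n)y}\widehat\psi_\ell(y)\,dy .
\]
For each fixed $\ell$ the inner integral is, up to the smooth factor $\widehat\psi_\ell$, essentially $(2\pi(m+n))^{-(k-1)}\Gamma\!\big(k-1,\,2\pi(m+n)A\big)$; being monotone in $m+n$ it confines the sum to $m,n\asymp k/A$ and is negligible otherwise — precisely the feature that fails for the oscillatory $K$-Bessel weight of a Maass form. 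The term $\ell=0$ is the diagonal and feeds the vertical stage; the content of the horizontal stage is that the off-diagonal part $\ell\ne0$ is $o(\|f_k\|^2)$, hence $o(1)$ after normalisation. Here a \emph{partial} bound towards the shifted convolution problem — any power saving over the trivial (Cauchy--Schwarz) bound for $\sum_n a_k(n)a_k(n+\ell)$ in terms of the length $\asymp k/A$ of the sum — together with a \emph{partial} Ramanujan bound $a_k(n)\ll n^{(k-1)/2+\theta+\varepsilon}$ to control the $n$-ranges and to sum over $\ell$ (convergent by the rapid decay of $\widehat\psi_\ell$), suffices; this is the step that consumes the available unconditional input and from which the dependence of the error on $A$ arises.

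\emph{Vertical equidistribution.} It then remains to treat the diagonal, i.e.\ to prove \eqref{Equation "QUE holomorphic theorem"} when $\psi=E_h(z)=\sum_{\gamma\in\Gamma_\infty\backslash\Gamma}h(\Im\gamma z)$ is an incomplete Eisenstein series with $h\in C_c^\infty(\R_{>0})$; unfolding gives
\[
\mu_k(E_h)=\frac{1}{\|f_k\|^2}\int_0^\infty y^{k-2}\Big(\sum_{n\ge1}|a_k(n)|^2e^{-4\pi n y}\Big)h(y)\,dy .
\]
By Rankin--Selberg unfolding, $\|f_k\|^2=\tfrac{\pi}{3}\,\Gamma(k)(4\pi)^{-k}$ times $\mathrm{Res}_{s=1}\sum_{n\ge1}|a_k(n)|^2 n^{-(k-1)-s}$, a quantity of size $k^{o(1)}$; so the claim reduces to the mean-value estimate that $\sum_{n\le X}|a_k(n)|^2 n^{-(k-1)}\sim cX$ with $c$ the same residue, uniformly in $k$, which follows from the partial Ramanujan bound and a contour shift for the Rankin--Selberg zeta function. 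Because $\sum_n|a_k(n)|^2 n^{-(k-1)}\cdot n^{k-1}e^{-4\pi ny}$ concentrates near $n\asymp k/(4\pi y)$ on a scale $\gg1$, it is asymptotic to $c\,\Gamma(k)(4\pi y)^{-k}$; inserting this the $\Gamma(k)$ factors cancel against $\|f_k\|^2$, and one obtains $\mu_k(E_h)\to\tfrac{3}{\pi}\int_0^\infty h(y)\,\tfrac{dy}{y^2}=\tfrac{3}{\pi}\int_{\X}E_h(z)\,\tfrac{dx\,dy}{y^2}$, with the error again governed by $A=\inf\{\Im z:z\in\mathrm{Supp}(\psi)\}$. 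This stage uses nothing holomorphic and goes through verbatim for Maass forms and half-integral weight forms.

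\emph{Main obstacle.} The delicate point is the horizontal stage: securing an unconditional, $k$-uniform power saving for the relevant smoothed shifted convolution sums after the incomplete-Gamma truncation while invoking only partial progress towards Ramanujan and towards the shifted convolution problem, and tracking the resulting error purely in terms of $A$. The vertical stage, by contrast, is a soft consequence of Rankin--Selberg theory together with a Laplace-type concentration estimate for the $n$-sum.
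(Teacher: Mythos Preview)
Your two-stage architecture matches the paper's (Theorems~\ref{Theorem "QUE Holomorphic Horizontal"} and~\ref{Theorem "QUE Holomorphic Vertical"}), and your horizontal stage is in the same spirit: both kill the off-diagonal $\ell\neq0$ contribution using (partial) Ramanujan together with the localisation forced by the incomplete-Gamma weight. The paper tests against characteristic functions of rectangles rather than expanding $\psi$ in Fourier modes, but this is cosmetic.

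The gap is in your vertical stage. You assert that the mean-value estimate $\sum_{n\le X}\lvert\lambda_k(n)\rvert^2\sim cX$, uniformly in $k$ for $X\asymp k$, ``follows from the partial Ramanujan bound and a contour shift for the Rankin--Selberg zeta function.'' It does not. Shifting to $\Re(s)=1/2$ leaves you with the convexity bound $L(1/2+it,\sym^2 f_k)\ll_\varepsilon (k(1+|t|))^{1/2+\varepsilon}$, producing an error of order $k^{1/2+\varepsilon}\sqrt{X}$; when $X\asymp k$ this matches the main term. The paper says exactly this in the introduction, and notes that even Soundararajan's weak subconvexity misses by an arbitrary power of $\log k$. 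Indeed the paper obtains precisely this short-interval mean value (Corollary~\ref{Corollary "Mean Values"}) \emph{as a consequence} of Theorem~\ref{Theorem "QUE holomorphic"}, not as an input to it. So your vertical step, as written, is circular.

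The paper sidesteps this obstruction by an entirely different device. Once horizontal equidistribution holds, one may write $I_k'(T)=-\|f_k\|^{-2}\lvert f_k(x+iT)\rvert^2 T^{k-2}+o(1)$ for any fixed $x$; the modularity of $f_k$ then yields $I_k'(\Im(\gamma\cdot iT))=(c^2T^2+d^2)^2 I_k'(T)+o(1)$ for every $\gamma=\left(\begin{smallmatrix}a&b\\c&d\end{smallmatrix}\right)\in SL_2(\Z)$. Playing this relation against the density of quotients of primes $\equiv1\pmod4$ forces $I_k'(T)=I_k'(1)/T^2+o(1)$, hence $I_k(T)=-I_k'(1)/T+o(1)$, and the constant is pinned down by the normalisation $\mu_k(\mathfrak{F})=1$ via a Riemann-sum approximation of the fundamental domain by thin Siegel sets. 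No $L$-function input appears anywhere in this step; that is why the paper can claim the vertical part generalises to arbitrary cusp forms. Your Rankin--Selberg route is the natural first instinct, but it needs genuine subconvexity to close.
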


We have the following corollaries to Theorem \ref{Theorem "QUE holomorphic"}.

\begin{corollary}\label{Corollary "Lehmer"}
	Suppose that $ f_k $'s are a sequence of Hecke eigenforms of weight $ k $, then for a given prime $ p $, there exists an integer $ k $ such that for all $ l > k $, we have $ a_l(p)\neq 0 $.
\end{corollary}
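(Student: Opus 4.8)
The plan is to derive Corollary \ref{Corollary "Lehmer"} from Theorem \ref{Theorem "QUE holomorphic"} by a contradiction argument exploiting the fact that QUE forces the mass of $f_k$ to spread out over $\mathfrak{X}$, which is incompatible with a Fourier coefficient vanishing at a fixed prime for infinitely many $k$. More precisely, suppose for contradiction that there is a prime $p$ and an infinite increasing sequence $k_1 < k_2 < \cdots$ with $a_{k_j}(p) = 0$ for all $j$. The idea is that the $p$th Hecke operator $T_p$ acts on $f_{k_j}$ with eigenvalue $a_{k_j}(p) = 0$, and this should be translated into a statement about the behaviour of the measure $\mu_{k_j}$ under the geometric action associated to $T_p$ on $\mathfrak{X}$, namely the Hecke correspondence of index $p$.

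First I would recall the geometric meaning of $T_p$: it corresponds to the $(p+1)$-valued Hecke correspondence on $\mathfrak{X}$ sending $z$ to the points $pz, z/p, (z+1)/p, \ldots, (z+p-1)/p$ (suitably interpreted on $\mathfrak{X}$). For a Hecke eigenform, the relation $T_p f_k = a_k(p) f_k$ yields, after unfolding the Petersson inner product against a test function $\psi$, an identity of the form $\mu_k(\widehat{T_p}\psi) = \frac{|a_k(p)|^2}{p^{k-1}}\,\mu_k(\psi)$ up to normalization, where $\widehat{T_p}$ is the dual correspondence acting on $C_c^\infty(\mathfrak{X})$; alternatively and more robustly, one writes $a_k(p)$ directly as a period integral of $y^k|f_k|^2$ against an explicit (non-negative-average) kernel. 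The cleanest route, however, is arithmetic rather than dynamical: compare $\|f_k\|^2$ computed two ways. Since $a_k(p)=0$ kills the local Euler factor at $p$ in a controlled way, one shows that the symmetric square $L$-value, or more elementarily the partial sum $\sum_{n\le X} a_k(n)^2 n^{-k+1}$, cannot grow at the rate dictated by QUE; indeed QUE applied to a fixed bump function $\psi$ supported high in the cusp, combined with unfolding the Rankin--Selberg integral, forces $\sum_{n} a_k(n)^2 e^{-4\pi n y}/n^{k-1}$ to have a definite main term, and the vanishing of $a_k(p)$ removes too large a proportion of this sum when one also uses multiplicativity $a_k(p^j)$ being governed by $a_k(p)$. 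Running this for all $k_j$ in the sequence contradicts \eqref{Equation "QUE holomorphic theorem"} with an explicit $\psi$.

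Concretely, the steps in order are: (i) fix a non-negative $\psi\in C_c^\infty(\mathfrak{X})$ with $\int_{\mathfrak{X}}\psi\, y^{-2}\,dx\,dy = 1$, chosen to be supported on a neighbourhood where one has good control, and record the lower bound for $\mu_k(\psi)$ coming from Theorem \ref{Theorem "QUE holomorphic"} for $k$ large; (ii) express $\mu_k(\psi)$ via the Fourier expansion of $f_k$, so that it becomes $\frac{1}{\|f_k\|^2}$ times a weighted sum over $n$ of $|a_k(n)|^2$ against the Fourier coefficients of $\psi$ (a shifted-convolution-type main term plus lower order); (iii) observe that if $a_{k}(p)=0$ then, by Hecke multiplicativity, $a_k(p^{2j}) = (-p^{k-1})^{j}\cdot(\text{unit})$ while $a_k(p^{2j+1})=0$, and hence the $p$-part of the Dirichlet series $\sum a_k(n)^2 n^{-s}$ is altered by a bounded Euler factor, but more importantly $a_k(n)=0$ for every $n$ with $v_p(n)$ odd; (iv) conclude that the sum in (ii) loses a positive-density set of terms, forcing $\mu_k(\psi)$ to be bounded away from $3/\pi \cdot \int\psi\,y^{-2}dxdy$ from below by a fixed positive amount, uniformly along the subsequence $k_j$; (v) this contradicts (i), so no such prime $p$ and infinite subsequence exist, which is exactly the assertion of the corollary (equivalently, $a_l(p)\ne 0$ for all $l$ beyond some $k=k(p)$).

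The main obstacle I anticipate is step (iv): ensuring that deleting the terms with $v_p(n)$ odd genuinely changes $\mu_k(\psi)$ by a quantity bounded below independently of $k$, rather than by something that could conceivably shrink as $k\to\infty$. This requires knowing that the contribution of those deleted terms to the QUE main term is a fixed positive proportion — essentially an Euler-product computation showing the local factor at $p$ of the relevant Rankin--Selberg/symmetric-square object is a constant strictly less than $1$ when $a_k(p)=0$, together with a uniform (in $k$) lower bound on the total mass coming from Theorem \ref{Theorem "QUE holomorphic"} itself. Since Theorem \ref{Theorem "QUE holomorphic"} is stated with a rate of convergence depending only on $\mathrm{Supp}(\psi)$, this uniformity is available, and the Euler factor at $p$ is an elementary finite computation; so the obstacle is real but surmountable, and the argument should go through cleanly once the normalizations in step (ii) are pinned down.
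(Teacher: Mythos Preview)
Your chosen route (steps (i)--(v)) has a genuine gap at step (iv). The quantity $\mu_k(\psi)$ is a \emph{ratio}: the numerator is $\int_{\mathfrak{X}}\psi\,y^k|f_k|^2\,dz$ and the denominator is $\|f_k\|^2=\int_{\mathfrak{X}}y^k|f_k|^2\,dz$. When $a_k(p)=0$, the diagonal Fourier terms with $v_p(n)$ odd vanish from \emph{both} integrals, and the Euler-factor deficit at $p$ that you compute multiplies numerator and denominator by the same constant, so it cancels in the ratio. More decisively, Theorem~\ref{Theorem "QUE holomorphic"} is proved in the paper for \emph{every} Hecke eigenform, with no hypothesis on the nonvanishing of any $a_k(p)$; hence $\mu_{k_j}(\psi)\to(3/\pi)\int_{\mathfrak{X}}\psi\,dz$ along your subsequence as well, and there is nothing left to contradict. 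Your acknowledged obstacle is therefore not surmountable by an Euler-product calculation---the calculation is correct, but it applies equally to $\|f_k\|^2$ and so cannot move $\mu_k(\psi)$ away from its QUE limit.

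The paper's argument is structurally different. It does not test $\mu_k$ against a fixed $\psi$; instead it uses the \emph{pointwise} output of Theorems~\ref{Theorem "QUE Holomorphic Horizontal"} and~\ref{Theorem "QUE Holomorphic Vertical"}, namely that $|f_k(x+iy)|^2 y^{k-2}/\|f_k\|^2=-I_k'(y)+o(1)\to 3/(\pi y^2)$, and feeds this into the Hecke relation evaluated at the single point $z=iT$. Writing out $(T_p f_k)(iT)=a_k(p)f_k(iT)$ and setting $a_k(p)=0$ yields
\[
\frac{1}{p}\sum_{m=0}^{p-1} f_k\!\left(\frac{m+iT}{p}\right)=-\,p^{k-1}f_k(ipT),
\]
which compares values of $f_k$ at the two heights $T/p$ and $pT$. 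After taking $|\cdot|^2 T^{k-2}/\|f_k\|^2$ and inserting the pointwise asymptotic at each height, one obtains $3p^2/(\pi T^2)+o(1)=3/(\pi T^2)+o(1)$, a contradiction for large $k$. The essential ingredient your plan lacks is precisely this comparison of heights: the Hecke relation transports mass between the horocycles $y=T/p$ and $y=pT$, and QUE pins down the mass density on each horocycle to incompatible values when $a_k(p)=0$. Your brief aside about $\mu_k(\widehat{T_p}\psi)$ was in fact pointing toward this mechanism before you abandoned it for the arithmetic route.
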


For a purely algebraic approach to Corollary \ref{Corollary "Lehmer"}, see \cite{Calegari}.

In light of \eqref{Equation "I_k(T) definition 2"} below, the problem of evaluating the limit \eqref{Equation "QUE holomorphic theorem"} can be easily transferred to the problem of estimating the partial sums of the form $ \sum_{n\leq X} \lambda^2_k(n) $, where $ \lambda^2_k(n) = a^2_k(n)n^{1-k} $. If $ X\gg k $, there is a very satisfactory asymptotic for these partial sums which can be deduced form the Rankin-Selberg arguments or from standard Perron's formula-Convexity bound arguments for the symmetric square $ L $ function associated to $ f_k $. However we are interested in the range $ X \asymp k $ and the above methods fail in this range. This is precisely because the spectral aspect convexity bound is of the size $ \sqrt{k} $ and this barely fails to provide us with a sharp enough bound. More precisely we require an arbitrarily small power saving of $ k $ (which is to say that we need an estimate of the type $ k^{1/2-\epsilon} $ for a small positive $ \epsilon $). If we use the bound of Hoffstein and Lockhart on $ L(1,\sym^2 f_k) $ and then the weak subconvexity bound of Soundararajan we again are short, this time by an arbitrarily small power of $ \log(k) $. Therefore, any improvements towards the `subconvexity' problem will automatically prove the QUE conjecture. But at the moment proving estimates of the type mentioned above seem extremely hard. We are however able to deduce the following corollary.

\begin{corollary}\label{Corollary "Mean Values"}
	If $ f_k $ is a Hecke eigenform of weight $ k $ for $ SL_2(\Z) $, then for any positive constant $ \epsilon $.
	\[
	\sum_{n\leq \epsilon k} |\lambda_k(n)|^2 = \epsilon kL(1,\sym^2 f_k) + o(kL(1,sym^2f_k)).
	\]
\end{corollary}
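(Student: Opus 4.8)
\emph{Proof plan.} This is the converse of the reduction recorded in \eqref{Equation "I_k(T) definition 2"}: expanding $f_k$ in its Fourier series and integrating over $x\in[0,1)$ shows that, for every $T>0$,
\[
I_k(T):=\frac{1}{\|f_k\|^2}\int_{0}^{1}\!\!\int_{T}^{\infty} y^{k}\,|f_k(x+iy)|^{2}\,\frac{dx\,dy}{y^{2}}
=\frac{(4\pi)^{1-k}}{\|f_k\|^{2}}\sum_{n\ge 1}\lambda_k^{2}(n)\,\Gamma\!\big(k-1,\,4\pi nT\big),
\]
where $\Gamma(s,x)$ is the upper incomplete Gamma function, and that, geometrically, $I_k(T)=\mu_k(E_T)$ with $E_T(z):=\#\{\gamma\in\Gamma_\infty\backslash SL(2,\Z):\Im(\gamma z)>T\}$ the bounded, $SL(2,\Z)$--invariant counting function obtained by folding the strip $\{0\le x<1,\ \Im z>T\}$ into $\X$ (for $T\ge 1$ one just has $E_T=\mathbf{1}_{\{\Im z>T\}}$ on the standard fundamental domain, so $I_k(T)$ is literally the $\mu_k$--mass of a cusp neighbourhood). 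Theorem~\ref{Theorem "QUE holomorphic"} — which, by the usual density argument together with the control of escape of mass that its proof supplies, extends to bounded test functions whose discontinuity locus is a null set, a mild enlargement that $E_T$ requires since its jumps lie on finitely many analytic arcs — then gives, for each fixed $T>0$,
\[
I_k(T)\ \longrightarrow\ \frac{3}{\pi}\int_{\X}E_T\,\frac{dx\,dy}{y^{2}}=\frac{3}{\pi}\cdot\frac1T\qquad(k\to\infty).
\]
On the other hand the Rankin--Selberg unfolding identity gives $\|f_k\|^{2}=\tfrac{\pi}{3}\,\tfrac{\Gamma(k)}{(4\pi)^{k}}\,\mathrm{Res}_{s=1}\big(\sum_n\lambda_k^{2}(n)n^{-s}\big)$; substituting this into the two displays yields
\[
\frac1{\Gamma(k-1)}\sum_{n\ge1}\lambda_k^{2}(n)\,\Gamma\!\big(k-1,4\pi nT\big)\ \sim\ \frac{3(k-1)}{2\pi^{3}T}\,L(1,\sym^{2}f_k),
\]
where the precise constant matches the Corollary under the paper's normalisation of $L(1,\sym^2 f_k)$ as the residue at $s=1$ of $\sum_n\lambda_k^{2}(n)n^{-s}$ (a factor $\zeta(2)^{-1}$ relative to the classical symmetric square).

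It remains to de--smooth, i.e.\ to pass from the incomplete Gamma weight $w_T(n):=\Gamma(k-1,4\pi nT)/\Gamma(k-1)$ to the sharp cutoff $\mathbf{1}_{\{n\le X\}}$. Now $w_T(n)$ is the upper tail at $4\pi nT$ of a Gamma$(k-1)$ law of mean $k-1$; it decreases in $n$, its transition occurs at $n_0(T):=(k-1)/(4\pi T)$ over a window of width $\asymp\sqrt k$, and by the Chernoff bound $w_T(n)=1-O(e^{-c\delta^{2}k})$ for $n\le(1-\delta)n_0(T)$ while $w_T(n)=O(e^{-c\delta^{2}k})$ for $n\ge(1+\delta)n_0(T)$. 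For the target $X=\epsilon k$ choose $T$ with $n_0(T)=X$ (and shifted parameters $T'',T'$ with $n_0=(1-\delta)X,(1+\delta)X$); since $\epsilon$ is a fixed positive constant all of these are bounded away from $0$ and $\infty$, so the rate in Theorem~\ref{Theorem "QUE holomorphic"} is uniform over the heights involved. Sandwiching $\mathbf{1}_{\{n\le X\}}$ between $w_{T''}$ and $w_{T'}/(1-O(e^{-c\delta^{2}k}))$, controlling the tail produced below by $\sum_{n>X}\lambda_k^{2}(n)w_{T''}(n)\le\sum_{n>X}d(n)^{2}w_{T''}(n)\ll e^{-c\delta^{2}k}k^{O(1)}$ via Deligne's bound $|\lambda_k(n)|\le d(n)$, and then letting $\delta=\delta(k)\to0$ with $\delta^{2}k\to\infty$, one obtains
\[
\sum_{n\le\epsilon k}\lambda_k^{2}(n)=\epsilon k\,L(1,\sym^{2}f_k)+o\!\big(k\,L(1,\sym^{2}f_k)\big).
\]
The error from the lower sandwich is $o(1)$ in absolute terms, hence $o\big(kL(1,\sym^2 f_k)\big)$, because $L(1,\sym^2 f_k)\gg 1/\log k$ by Hoffstein--Lockhart; the upper sandwich needs no such input.

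The analytic substance sits entirely inside Theorem~\ref{Theorem "QUE holomorphic"}; the remainder is bookkeeping, and the one place that genuinely requires care is the regime $T<1$ (equivalently $\epsilon$ not small), where $\{\Im z>T\}$ is no longer contained in a fundamental domain and one must work with the folded function $E_T$, checking that Theorem~\ref{Theorem "QUE holomorphic"} applies to this merely bounded, non--continuous test function — the step where the no--escape--of--mass and the control of $\mu_k$ on thin tubes implicit in the proofs of Theorems~\ref{Theorem "QUE holomorphic"} and~\ref{Theorem "QUE Holomorphic Vertical"} are used. For $\epsilon$ small enough that $T\ge 1$ the region is literally a cusp neighbourhood and the argument is completely clean, so I would present that case first and only then fold.
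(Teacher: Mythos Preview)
Your approach is correct and essentially identical to the paper's: both read off $I_k(T)$ from \eqref{Equation "I_k(T) definition 2"}, invoke QUE to get $I_k(T)\to 3/(\pi T)$, and de-smooth the incomplete-Gamma weight using its sharp transition at $4\pi nT\approx k$ together with Deligne's bound on the short transition window (your Chernoff-type tail estimate is precisely the content of Lemma~\ref{Lemma "Gamma function"}, and the paper likewise uses the Ramanujan bound over a window of length $k^{1/2+\delta}$). One simplification: Theorem~\ref{Theorem "QUE Holomorphic Vertical"} already gives $I_k(T)\to 3/(\pi T)$ for \emph{every} $T>0$ directly from the unfolded integral and modularity, so your folding via $E_T$ and the Portmanteau-type extension of Theorem~\ref{Theorem "QUE holomorphic"} to discontinuous test functions are unnecessary --- you may cite Theorem~\ref{Theorem "QUE Holomorphic Vertical"} instead and drop that paragraph entirely.
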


This can be compared with the error term arising from the weak subconvexity bound of Soundararajan \cite{Sound Weak Subconvexity}.

Suppose that $ F_k\in S_k(SL(2,\Z)) $ is a cusp form which is not necessarily a Hecke eigenform. Let us denote the basis of normalized Hecke eigenforms for $ S_k(SL(2,\Z)) $ by $ \{f_k^{(1)}, f_k^{(2)}, \ldots, f_k^{(j_k)}\} $. Furthermore suppose that $ F_k(z) = \sum_{i=1}^{j_k} \alpha_k^{(i)} f_k^{(i)}(z) $ for some complex numbers $ \alpha_k^{(i)} \neq 0 $. Since the Hecke eigenforms are orthogonal with respect to the Petersson inner product, we also see that 
\begin{equation}\label{Equation "Admissible norm"}
	\|F_k\|^2 = \sum_{i=1}^{j_k} |\alpha_k^{(i)}|^2 \|f_k^{(i)}(z)\|^2.
\end{equation}

\begin{definition}\label{Definition "Admissible forms"}
	A sequence of non zero cusp forms $ F_k $ (with $ \alpha_k^{(i)} $ as above) is called a sequence of \textit{admissible forms} if $ \min\{|\alpha_k^{(i)}|^2\} $ is bounded below independent of $ k $.
\end{definition}
In particular if $ \{F_k\} $ is a sequence of admissible forms, then for every fixed $ k $ it follows that 
\begin{equation}\label{Equation "Bound admissible form"}
	\|F_k\|^2 \geq |\alpha_k^{(i)}|^2\|f_k^{(i)}(z)\|^2 \gg \|f_k^{(i)}\|^2,
\end{equation}
where the implied constant is independent of $ k $. 

\begin{corollary}\label{Corollary "Rudnick"}
	If $ \{F_k\} $ is an admissible sequence of cusp forms, then for any test function $ \psi\in C_c^\infty(\X) $, we have
	\begin{equation}\label{Equation "QUE Admissible"}
		\lim_{k\to\infty} \frac{1}{\|F_k\|^2}  \int\limits_{\mathfrak{X}} y^k |F_k(z)|^2 \psi(z) \frac{dxdy}{y^2} = \frac{3}{\pi}  \int\limits_{\mathfrak{X}}\psi(z) \frac{dxdy}{y^2}.
	\end{equation}
\end{corollary}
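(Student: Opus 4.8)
The plan is to expand $|F_k|^2$ in the basis of Hecke eigenforms and to reduce everything to Theorem \ref{Theorem "QUE holomorphic"} together with a control of the resulting cross terms. Writing $F_k=\sum_{i=1}^{j_k}\alpha_k^{(i)}f_k^{(i)}$ one has
\[
y^k|F_k(z)|^2=\sum_{i=1}^{j_k}|\alpha_k^{(i)}|^2\,y^k|f_k^{(i)}(z)|^2+\sum_{i\neq j}\alpha_k^{(i)}\overline{\alpha_k^{(j)}}\,y^kf_k^{(i)}(z)\overline{f_k^{(j)}(z)},
\]
and, by \eqref{Equation "Admissible norm"}, dividing the diagonal part by $\|F_k\|^2$ produces a \emph{convex} combination $\sum_i w_k^{(i)}\mu_k^{(i)}(\psi)$ of the measures attached to the individual eigenforms, with weights $w_k^{(i)}=|\alpha_k^{(i)}|^2\|f_k^{(i)}\|^2/\|F_k\|^2$ summing to $1$. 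Since Theorem \ref{Theorem "QUE holomorphic"} gives $\mu_k^{(i)}(\psi)=\frac{3}{\pi}\int_{\X}\psi\,\frac{dxdy}{y^2}+o(1)$ with an error that is \emph{uniform} in the choice of eigenform (the rate depending only on $\mathrm{Supp}(\psi)$), the convex combination converges to the same limit. Thus the diagonal contributes precisely the expected main term, and for this step the admissibility hypothesis plays no role.

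It remains to show that the off-diagonal contribution is negligible, i.e. that
\[
\frac{1}{\|F_k\|^2}\sum_{i\neq j}\alpha_k^{(i)}\overline{\alpha_k^{(j)}}\int_{\X}y^kf_k^{(i)}(z)\overline{f_k^{(j)}(z)}\,\psi(z)\,\frac{dxdy}{y^2}\longrightarrow 0.
\]
Here I would run the proof of Theorem \ref{Theorem "QUE holomorphic"} not for $\mu_k$ but for the pair $(f_k^{(i)},f_k^{(j)})$: reduce, as there, to testing against incomplete Eisenstein and incomplete Poincaré series, unfold, and use that for $i\neq j$ the Rankin--Selberg convolution of $f_k^{(i)}$ against $f_k^{(j)}$ has \emph{no} pole, so that no main term appears --- only the error terms of the ``vertical'' estimate, which is insensitive to whether the form is a Hecke eigenform (this is Theorem \ref{Theorem "QUE Holomorphic Vertical"}, which holds for any modular form and for which the relevant mean value $\sum_{n\le X}|A_k(n)|^2$ is again controlled by the Rankin--Selberg of $F_k$ itself, with $\|F_k\|^2$ in place of $\|f_k\|^2$), and those of the ``horizontal'' shifted--convolution estimate, whose proof uses multiplicativity only minimally and therefore applies to shifted convolutions $\sum_n a_k^{(i)}(n)\overline{a_k^{(j)}(n+h)}$ of distinct eigenforms as well. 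Summing the resulting bounds over the $O(j_k^2)$ pairs, the admissibility hypothesis enters exactly here: by \eqref{Equation "Bound admissible form"} it forces $\|F_k\|^2=\sum_i|\alpha_k^{(i)}|^2\|f_k^{(i)}\|^2\gg\sum_i\|f_k^{(i)}\|^2$, which together with the Hoffstein--Lockhart lower bound for each individual norm is of order $j_k\cdot\Gamma(k)(4\pi)^{-k}$ up to a negligible factor, so that the normalization absorbs the number of eigenforms, while a Cauchy--Schwarz inequality in the $\alpha_k^{(i)}$'s against the same quantity handles the combination of the coefficients. Combining this with the diagonal estimate yields \eqref{Equation "QUE Admissible"}.

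The main obstacle is the off-diagonal term. The trivial bound
\[
\Bigl|\int_{\X}y^kf_k^{(i)}(z)\overline{f_k^{(j)}(z)}\,\psi(z)\,\frac{dxdy}{y^2}\Bigr|\;\le\;\|\psi\|_\infty\,\|f_k^{(i)}\|\,\|f_k^{(j)}\|
\]
destroys all cancellation and, once summed over the $\asymp k/12$ eigenforms, overwhelms $\|F_k\|^2$; what is genuinely needed is the cancellation supplied by the shifted--convolution input of Theorem \ref{Theorem "QUE Holomorphic Horizontal"} for pairs of distinct forms, with implied constants uniform in the weight, and it is precisely to keep the ensuing sum under control against $\|F_k\|^2$ that the admissibility condition of Definition \ref{Definition "Admissible forms"} is imposed. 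Everything else --- the reduction to incomplete Eisenstein and Poincaré series, the extraction of the main term from the Rankin--Selberg pole (whose residue is automatically $\|F_k\|^2$ times the universal constant, since it equals $\langle y^k|F_k|^2,1\rangle$ for any cusp form), and the vertical mean--value estimate --- is a routine transcription of the eigenform argument with $F_k$ and $\|F_k\|^2$ in place of $f_k$ and $\|f_k\|^2$.
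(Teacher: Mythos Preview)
Your overall architecture is exactly the paper's: expand $|F_k|^2$ along the Hecke basis, treat the diagonal as a convex combination of the $\mu_k^{(i)}(\psi)$ and invoke Theorem \ref{Theorem "QUE holomorphic"} with its uniformity in the choice of eigenform, and kill the off-diagonal by rerunning the proof of Theorem \ref{Theorem "QUE holomorphic"} for a pair $(f_k^{(i)},f_k^{(j)})$ with $i\neq j$, using admissibility via \eqref{Equation "Bound admissible form"} to absorb the normalisation. That much is right and matches the paper.

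Where you diverge is in the description of the inner mechanism. You speak of ``reduce, as there, to testing against incomplete Eisenstein and incomplete Poincar\'e series'' and of the Rankin--Selberg pole; that is the Holowinsky--Soundararajan set-up, not this paper's. Here the dense family of test functions consists of characteristic functions of \emph{rectangles}, and the proof splits into the horizontal estimate (Theorem \ref{Theorem "QUE Holomorphic Horizontal"}) and the vertical one (Theorem \ref{Theorem "QUE Holomorphic Vertical"}). For a pair $(f_k^{(i)},f_k^{(j)})$ the horizontal argument goes through verbatim --- the shifted sums $\sum_n a_k^{(i)}(n)a_k^{(j)}(n+l)$ are bounded exactly as before --- and the vanishing of the main term is \emph{not} read off from the absence of a Rankin--Selberg pole but from the end of the proof of Theorem \ref{Theorem "QUE Holomorphic Vertical"}: the constant $-I_k'(1)$ is pinned down by integrating over $\mathfrak F$, and that integral is $\langle f_k^{(i)},f_k^{(j)}\rangle_{\mathfrak X}/\|f_k^{(i)}\|^2=0$ by Petersson orthogonality. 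This is what the paper records as \eqref{Equation "Limit Orthogonality"}. Your Rankin--Selberg explanation is morally equivalent, but it belongs to a different proof.

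One point where you are actually more careful than the paper: you flag that there are $O(j_k^2)$ off-diagonal pairs and that one must check the individual $o(1)$'s sum to $o(1)$. The paper glosses over this, simply asserting that ``the off diagonal terms \ldots all vanish as $k\to\infty$''. Your use of admissibility to get $\|F_k\|^2\gg\sum_i\|f_k^{(i)}\|^2$ together with Cauchy--Schwarz in the $\alpha_k^{(i)}$'s is a reasonable way to close this, provided the $o(1)$ in \eqref{Equation "Limit Orthogonality"} is shown to be uniform over all pairs $(i,j)$ --- which it is, since the horizontal and vertical error terms in the paper depend only on $k$, $T$ and the Deligne bound, not on the particular eigenform.
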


Finally we observe the following result which can be interpreted from a quantum chaos perspective. The definition of the generalized Siegel domain $ \mathfrak{S}(a,b,T) $ is given in \eqref{Equation "Generalized Siegel domain definition"}.

\begin{corollary}\label{Corollary "Mass distribution holomorphic"}
	Given a holomorphic modular form $ f_k $ of weight $ k $ if $ T \geq 4 k\log(k) $ then 
	\[\mu_k\left(\mathfrak{S}\left(-\frac{1}{2},\frac{1}{2},T\right)\right)\leq \frac{e^{-2\pi T}}{2\pi T}.\]
\end{corollary}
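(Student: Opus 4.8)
The plan is to bound the mass that $\mu_k$ assigns to the tail of the fundamental domain, i.e. to the part lying above height $T$, directly from the Fourier expansion of $f_k$. Write $f_k(z) = \sum_{n\geq 1} a_k(n) e^{2\pi i n z}$, so that on the strip $-\tfrac12 \leq x \leq \tfrac12$, by Parseval in the $x$-variable,
\[
\int\limits_{-1/2}^{1/2} |f_k(x+iy)|^2 \, dx = \sum_{n\geq 1} |a_k(n)|^2 e^{-4\pi n y}.
\]
Multiplying by $y^{k-2}$ and integrating over $y \geq T$ gives
\[
\|f_k\|^2 \,\mu_k\!\left(\mathfrak{S}\!\left(-\tfrac12,\tfrac12,T\right)\right) = \sum_{n\geq 1} |a_k(n)|^2 \int\limits_{T}^{\infty} y^{k-2} e^{-4\pi n y}\, dy = \sum_{n\geq 1} \frac{|a_k(n)|^2}{(4\pi n)^{k-1}} \,\Gamma(k-1,\, 4\pi n T),
\]
where $\Gamma(s,x)$ is the incomplete Gamma function. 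The first step, then, is to record this identity and to normalize using $\lambda_k^2(n) = a_k^2(n) n^{1-k}$, which recasts the right-hand side as $(4\pi)^{1-k}\sum_n \lambda_k^2(n) \Gamma(k-1, 4\pi n T)$.

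The second and main step is to estimate the incomplete Gamma factor in the regime $T \geq 4k\log k$, which forces $4\pi n T$ to be far to the right of the transition point $k-1$ even for $n=1$. In this range one has the rapid-decay bound $\Gamma(k-1, x) \leq x^{k-2} e^{-x}$ (valid once $x \geq k-1$, by integrating $t^{k-2}e^{-t} \leq x^{k-2} e^{-t}$ from $x$ to $\infty$ is too crude; instead use $\Gamma(k-1,x) \ll x^{k-2}e^{-x}$ which holds for $x$ bounded below by a constant multiple of $k$). Thus the $n$-th term is $\ll (4\pi n T)^{k-2} e^{-4\pi n T} (4\pi n)^{1-k} \lambda_k^2(n)$, and since $T \geq 1$ the factor $T^{k-2}$ is harmless relative to $e^{-4\pi n T}$: more precisely $(4\pi n T)^{k-2} e^{-4\pi n T} \leq e^{-2\pi n T}$ provided $2\pi n T \geq (k-2)\log(4\pi n T)$, which is exactly where $T \geq 4k\log k$ is used (for $n=1$ it gives $2\pi T \geq (k-2)\log(4\pi T)$; for larger $n$ the inequality only improves since the left side grows linearly in $n$ while the right grows logarithmically). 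One is then left with $\sum_n \lambda_k^2(n) (4\pi n)^{1-k} e^{-2\pi n T}$, which after again comparing $(4\pi n)^{1-k}$ — bounded by $1$ for $n\geq 1$ since $4\pi > 1$ and $k \geq 2$ — reduces to $e^{-2\pi T}\sum_{n\geq 1} \lambda_k^2(n) e^{-2\pi(n-1)T}$.

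The final step is to control $\sum_{n\geq 1}\lambda_k^2(n) e^{-2\pi(n-1)T}$ against $\|f_k\|^2$. Here I would use that, by the Rankin–Selberg unfolding computed elsewhere in the paper (the identity referenced as \eqref{Equation "I_k(T) definition 2"}), $\|f_k\|^2$ is comparable to $(4\pi)^{1-k}\Gamma(k-1)\sum_{n} \lambda_k^2(n) n^{-(k-1)}\cdot(\text{something})$; more directly, taking $T \to 0^+$ in the displayed identity recovers $\|f_k\|^2 = (4\pi)^{1-k}\Gamma(k-1)\sum_n \lambda_k^2(n)$ (this is just the Petersson norm written out), so the ratio we want is
\[
\mu_k\!\left(\mathfrak{S}\!\left(-\tfrac12,\tfrac12,T\right)\right) \leq \frac{e^{-2\pi T}\sum_n \lambda_k^2(n) e^{-2\pi(n-1)T}}{\Gamma(k-1)\sum_n \lambda_k^2(n)} \leq \frac{e^{-2\pi T}}{\Gamma(k-1)} \leq \frac{e^{-2\pi T}}{2\pi T},
\]
the middle inequality because $e^{-2\pi(n-1)T}\leq 1$ termwise, and the last because $\Gamma(k-1) \geq 1 \geq 1/(2\pi T)$ once $T$ is as large as assumed — in fact $2\pi T \geq 8\pi k \log k \geq 1$. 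I expect the main obstacle to be pinning down the exact constants in the incomplete-Gamma decay bound so that the clean inequality $(4\pi n T)^{k-2}e^{-4\pi n T} \leq e^{-2\pi n T}$ holds for \emph{all} $n \geq 1$ uniformly under the single hypothesis $T \geq 4k\log k$; this is purely a calculus estimate (take logarithms and check the worst case $n=1$, $k$ large), but getting the numerology to land exactly on the stated bound $e^{-2\pi T}/(2\pi T)$ rather than something slightly weaker requires a little care in how the polynomial factors $T^{k-2}$ and $(4\pi n)^{1-k}$ are absorbed.
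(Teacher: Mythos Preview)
Your setup is exactly the paper's: unfold to get
\[
\mu_k\bigl(\mathfrak{S}(-\tfrac12,\tfrac12,T)\bigr)=\frac{1}{\|f_k\|^2(4\pi)^{k-1}}\sum_{n\ge 1}|\lambda_k(n)|^2\,\Gamma(k-1,4\pi nT),
\]
and then use $T\ge 4k\log k$ to force the argument of the incomplete Gamma function past its transition range, obtaining $\Gamma(k-1,4\pi nT)\le 2e^{-2\pi nT}$. Up to here you and the paper agree.

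The gap is in your final step. The identity you invoke,
\[
\|f_k\|^2=(4\pi)^{1-k}\Gamma(k-1)\sum_{n\ge 1}\lambda_k^2(n),
\]
is false: the right-hand side diverges. Letting $T\to 0^+$ in the unfolded expression does not recover the Petersson norm, because $\mathfrak{S}(-\tfrac12,\tfrac12,T)$ is the vertical strip, not the fundamental domain, and $\mu_k$ of the full strip is infinite (it contains infinitely many $SL_2(\Z)$-translates of $\mathfrak F$). Equivalently, $\sum_{n\le X}\lambda_k^2(n)\sim c_k X$ by Rankin--Selberg, so the series has no chance of converging. Thus the cancellation you want between numerator and denominator does not exist, and the chain of inequalities ending in $e^{-2\pi T}/\Gamma(k-1)$ collapses. (Even granting that step, the last inequality as you wrote it, $1/\Gamma(k-1)\le 1/(2\pi T)$ via ``$\Gamma(k-1)\ge 1\ge 1/(2\pi T)$'', goes the wrong way.)

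What the paper does instead at this point is to control the numerator and denominator separately: it bounds $|\lambda_k(n)|^2$ above by Deligne (here the crude $|\lambda_k(n)|^2\le\tau(n)^2$, or even $\ll n$, suffices) and bounds $\|f_k\|^2$ below using the Hoffstein--Lockhart estimate \eqref{Equation "Bound f_k"}. This produces the factor $1/(k-1)!$ from the norm, which annihilates the $(4\pi)^{1-k}$ and leaves a geometrically decaying sum $\sum_n n\,e^{-2\pi nT}$. So the missing ingredient in your argument is precisely an independent lower bound on $\|f_k\|^2$; there is no shortcut via a closed-form Parseval identity over the strip.
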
 

The paper is structured as follows. In \S \ref{Section "Description of the method"} we give a brief description of the method. In \S \ref{Section "Preliminaries"} we recall some basic notions and prove some preliminary lemmas which will be used later on. In \S \ref{Section "Proof of Theorem 1"} we prove Theorem \ref{Theorem "QUE holomorphic"}. In \S \ref{Section "Proof of Corollaries"} we shall prove the corollaries. Finally in \S \ref{Section "Concluding remarks"} we shall mention some remarks and hint at further directions leading from this work.

\subsection*{Notation}

\begin{enumerate}
	\item	For a complex number $ z $, we denote by $ \Re(z) $ and $ \Im(z) $, the real and imaginary parts of $ z $.
	\item	$ dz $ denotes the measure $ dxdy/y^2 $.
	\item	$ k $ will denote an integer.
	\item	The symbol $ f_k $ will denote a cuspidal eigenform of integral weight $ k $ normalized so that $ a_k(1)=1 $ where $ a_k(n) $ is the $ n $th Fourier coefficient of $ f_k $.
	\item	$ \langle \cdot, \cdot \rangle_\X $ denotes the Petersson inner product on $ \mathfrak{X} $. The weight will be clear from the context.
	\item	We let $ \tau(n) $ denote the number of divisors of $ n $.
	\item	The symbol $ \epsilon $ shall denote an arbitrarily small positive quantity which may not be equal in different occurrences.
\end{enumerate}

\subsection*{Acknowledgments}

The author wishes to thank Prof. Kalyan Chakraborty for his support and encouragement and the Kerala School of Mathematics for its generous hospitality. The author would like to thank Mr. Sreejith M. M.\footnote{Kerala School of Mathematics}, Dr. Pramath Anamby\footnote{Harish Chandra Research Institute} and Prof. Ben Kane\footnote{Hong Kong University.} for wonderful discussions. The author also wishes to thank Prof. Ze\'ev Rudnick\footnote{Tel-Aviv University.}, Prof. Maksym Radziwill\footnote{Caltech.} and Prof. Valentin Blomer\footnote{Universit\"at Bonn.} for their valuable comments on the first draft of this manuscript.

\section{Description of the Method}\label{Section "Description of the method"}

In this section we shall give a brief description of the method without going into too much details. We first make some reductions to what is to be proven in terms of Theorem \ref{Theorem "QUE holomorphic"}. First we observe that it is enough to prove \eqref{Equation "QUE holomorphic theorem"} for a dense subset of $ C_c^\infty(\mathfrak{X}) $. In fact the complementary approaches of Soundararajan and Holowinsky in their proofs fundamentally differed in this choice of a dense subset (see \cite{Sound Notes} for an exposition of their results).

We choose the characteristic functions of rectangles (see \eqref{Equation "Recatngle definition"}) as our basis for $ C_c^\infty(\mathfrak{X}) $. Strictly speaking, the characteristic functions that we shall consider are neither smooth nor compactly supported, but they form a basis of $ L^1(\mathfrak{X}) $. It is in this sense that we say that the characteristic functions of rectangles form a basis for $ C_c^\infty (\mathfrak{X}) $. The advantage of making such a choice of basis is that it allows us to separate out the distribution of mass in the ``horizontal" and ``vertical" directions and study them separately. First we show the horizontal equidistribution of mass and deduce the vertical equidistribution from this.

Suppose we write $ z=x+iy $, where $ x,y\in\R $. We crucially use the fact that the modular forms considered can be written as a Fourier series involving certain coefficients summed against certain ``kernels". These kernels naturally split into two parts. The first part is periodic in $ x $ with period $ 1 $ and the second part has exponential decay in $ y $ as $ y\to\infty $. Corollary \ref{Corollary "Mass distribution holomorphic"} is a consequence of this exponential decay. The process of letting $ k\to\infty $ slows down the exponential decay of the kernel functions and it is precisely this slowing down which changes the behavior of the limit from the one described in Corollary \ref{Corollary "Mass distribution holomorphic"} to the one described in Theorem \ref{Theorem "QUE holomorphic"}.

As we have noted above, improvements towards the subconvexity problem for certain $ L $ functions will automatically prove the QUE conjecture. In fact using the Watson's formula, we can show that arbitrary power saving towards the subconvexity problem of $ L(1/2, f_k\times f_k\times \phi_j) $ (where $ f_k $ is as above and $ \phi_j $ is a Maass form) will imply QUE. Furthermore, following the arguments in \S \ref{Section "Mean Values in short intervals"}, we see that arbitrary power saving towards the subconvexity bound for $ L(1/2, \sym^2 f_k) $ is sufficient for the truth of QUE. Our method reudces this a step further. We shall deduce QUE from a certain weaker form of the Ramanujan bound (see Remark \ref{Remark "Ramanujan"}).

Much stronger and deeper results than what we require are available in the literature but in almost all of the cases the emphasis is on improving the exponent of $ n $ in the case of the bound on the Fourier coefficient. These power savings are achieved in many cases at the expense of a larger power of $ k $ occurring as an implicit constant. Therefore the uniformity of the bounds is as important to us as the strength of the bound itself.

Of course, all of these bounds are required to prove the horizontal equidistribution. Once the horizontal equidistribution is known, quite surprisingly we are able to deduce the vertical equidistribution and Theorem \ref{Theorem "QUE holomorphic"} by using nothing more than the modularity of $ f_k $'s and some basic calculus. 

\section{Preliminary Results and Background}\label{Section "Preliminaries"}

\subsection{Review of Quantum Unique Ergodicity}

There is a considerable amount of literature available today regarding the history and relevance of Conjecture \ref{Conjecutre "QUE General"}, some of which are \cite[Chapter 9]{Bergeron - Book}, \cite{Einsiedler notes}, \cite{Sarnak - Notes} \cite{Sound Notes}, \cite{Sound Escape of Mass}. One may also refer the introductory sections of \cite{Lindenstrauss} and \cite{Nelson - QUE varying level}. Because of the existence of such abundant literature, we shall only briefly mention the background for Conjecture \ref{Conjecutre "QUE General"} for the sake of completeness and for the convenience of the reader. For simplicity we shall restrict ourselves to $ \mathfrak{X} $ and describe the motivation from the perspective of Maass forms.

It can be shown that the geodesic flow on $ \X $ which can be considered as a Hamiltonian flow is ergodic, this is to say that every measurable set of $ \X $ invariant under the geodesic flow either has full measure or zero measure. Another interpretation which can be given is that the path of a ``classical particle" which is moving under the geodesic flow in $ \X $ will equidistribute as the time goes to infinity. This is the case of the classical geodesic flow.

In analogy of the reformulation of classical mechanics into quantum mechanics, one can reformulate the above phenomenon in terms of a ``quantum" flow. In this case we shall consider the flow of a ``quantum particle". In this case the various states (referred to as eigenstates) of the particle are replaced by the various $ L^2 $ normalized eigenfunctions of the hyperbolic Laplacian $ \Delta $. Following Born's interpretation the size of the eigenvalue then becomes a measure of the energy and the probability measure associated to the eigenfunction by Riesz representation theorem becomes the probability of finding that particle inside any measurable set. Therefore it makes sense to ask the analogous question of the classical ergodicity of geodesic flow mentioned in the previous paragraph in the quantum setting in the semiclassical limit. This is precisely Conjecture \ref{Conjecutre "QUE General"}. In this case, instead of letting the time go to infinity, we let the energy of the particle go to infinity, and ask the question whether the corresponding probability measures equidistribute with respect to the usual hyperbolic measure on $ \X $.

A weaker result, often called ``Quantum Ergodicity" (which was proven by Zelditch \cite{Zelditch - QE} in a more general setup) asserts that the mass of a typical eigenstate equidistributes as the eigenvalue goes to infinity. More precisely there is a density $ 1 $ subsequence of the eigenvalues $ \lambda_i $'s (as in Conjecture \ref{Conjecutre "QUE General"}) such that the corresponding probability measures equidistribute in the sense of Conjecture \ref{Conjecutre "QUE General"}. Therefore quantum ergodicity asserts that Conjecture \ref{Conjecutre "QUE General"} is true modulo the existence of an exceptional subsequence. Quantum `unique' ergodicity now asserts that such an exceptional subsequence cannot exist.

\subsection{Generalized Siegel domains} Let $ \mathfrak{F} $ be the standard closed fundamental domain for the action of $ SL(2,\Z) $ on the upper half plane $ \h $, that is
\[
\mathfrak{F} := \left\{ z = x+iy\ \Big|\ -\frac{1}{2} \leq x \leq \frac{1}{2}, |z| \geq 1\right\}.
\]
We shall interchangeably identify $ \mathfrak{F} $ with $ \mathfrak{X} $ and as the subset of $ \h $. Denote by $ \Gamma_\infty $ the subgroup of translations in $ SL(2,\Z) $ and denote the standard fundamental domain for the action of $ \Gamma_\infty $ on $ \h $ by $ \mathfrak{T} $. That is
\[
\mathfrak{T} = \left\{z = x+iy\ \Big|\ -\frac{1}{2} < x \leq \frac{1}{2}, y>0\right\}.
\]

Suppose that $ z_1,z_2\in \h/ \Gamma_\infty $ such that $ \Re(z_1)\neq \Re(z_2) $ and $ \Im(z_1)\neq \Im(z_2) $. Without loss of generality, assume that $ \Re(z_1) < \Re(z_2) $ and $ \Im(z_1) < \Im(z_2) $. Define 
\begin{equation}\label{Equation "Recatngle definition"}
	\mathcal{R}(z_1,z_2) : = \{z = x+iy\ |\ \Re(z_1) < x < \Re(z_2),\ \Im(z_1) < y < \Im(z_2) \}.
\end{equation}
For obvious reasons, sets of the form $ \mathcal{R}(z_1,z_2) $ will be called rectangles. The conditions on $ z_1, z_2 $ make sure that the rectangles have positive volume with respect to the hyperbolic measure $ dz $. 

For $ -\frac{1}{2} \leq a < b \leq \frac{1}{2} $ and $ T>0 $, define 
\begin{equation}\label{Equation "Generalized Siegel domain definition"}
	\mathfrak{S}(a,b,T) := \{z = x+iy\ |\ a<x<b, y>T\} \subset \mathfrak{T}.
\end{equation}

\subsection{Some preparatory results}

We start with some technical lemmas.

\begin{lemma}\label{Lemma "Gamma function"}
	Suppose $ \delta > 0 $, then 
	\[
	\lim_{k\to\infty} \frac{\Gamma(k-1, k - k^{1/2 + \delta})}{(k-2)!} = 1.
	\]
\end{lemma}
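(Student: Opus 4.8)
The plan is to show that the normalized incomplete gamma function $\Gamma(k-1, k-k^{1/2+\delta})/(k-2)!$ tends to $1$ by exhibiting that the ``missing'' tail, namely $\gamma(k-1, k-k^{1/2+\delta})/(k-2)! = 1 - \Gamma(k-1,k-k^{1/2+\delta})/(k-2)!$, tends to $0$. Writing $n = k-2$ and $x = k - k^{1/2+\delta}$, we have $x = n + 2 - k^{1/2+\delta}$, so for large $k$ the cutoff $x$ sits below the mean $n$ of the Gamma$(n+1,1)$ distribution by an amount $k^{1/2+\delta} - 2 \asymp k^{1/2+\delta}$, which is of larger order than the standard deviation $\sqrt{n}\asymp\sqrt k$. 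This is the classical regime in which the normalized incomplete gamma function is governed by a large-deviation estimate, and the point is simply to make that quantitative.

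First I would recall the probabilistic interpretation: if $X$ is a sum of $n+1$ i.i.d.\ Exp$(1)$ random variables (equivalently $X \sim \mathrm{Gamma}(n+1,1)$), then $\gamma(n+1,x)/n! = \mathbb{P}(X \leq x)$. With $x = n+1 - (k^{1/2+\delta}-1)$ and $t := k^{1/2+\delta}-1$, a Chernoff bound gives, for any $\lambda \in (0,1)$,
\[
\mathbb{P}(X \leq x) \leq e^{\lambda x}\,\mathbb{E}[e^{-\lambda X}] = e^{\lambda x}(1-\lambda)^{-(n+1)} = \exp\bigl((n+1)\log(1-\lambda)^{-1} + \lambda x\bigr).
\]
Choosing $\lambda = t/(n+1)$ (the optimal choice, which is $o(1)$) and using $\log(1-\lambda)^{-1} = \lambda + \lambda^2/2 + O(\lambda^3)$ yields
\[
\mathbb{P}(X \leq x) \leq \exp\Bigl(-\frac{t^2}{2(n+1)} + O\bigl(t^3/n^2\bigr)\Bigr).
\]
Since $t \asymp k^{1/2+\delta}$ and $n \asymp k$, the main term is $-t^2/(2(n+1)) \asymp -k^{2\delta}/2 \to -\infty$, while the error term is $O(k^{3(1/2+\delta)}/k^2) = O(k^{-1/2+3\delta})$, which is negligible provided we first treat the case $\delta < 1/6$; and for larger $\delta$ the bound is only stronger, or one simply notes the statement for large $\delta$ follows from that for small $\delta$ since $k - k^{1/2+\delta}$ is then even smaller. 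Hence $\gamma(k-1, k-k^{1/2+\delta})/(k-2)! \to 0$, and the claimed limit follows.

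Alternatively, and perhaps cleaner for the write-up, one can avoid probability entirely: substitute $u = x/(k-2)$ in the integral representation $\Gamma(k-1,x) = \int_x^\infty u^{k-2}e^{-u}\,du$, or directly estimate $\gamma(k-1,x) = \int_0^x u^{k-2} e^{-u}\,du$ by comparing it with its value at the upper endpoint and using that the integrand $u^{k-2}e^{-u}$ is increasing on $[0, k-2]$ — so $\gamma(k-1,x) \leq x\cdot x^{k-2}e^{-x}$ — together with Stirling's formula $(k-2)! \sim \sqrt{2\pi(k-2)}\,(k-2)^{k-2}e^{-(k-2)}$. The ratio $x \cdot x^{k-2}e^{-x}/(k-2)!$ then reduces, after writing $x = (k-2)(1-\eta)$ with $\eta = (k^{1/2+\delta}-2)/(k-2) \asymp k^{-1/2+\delta}$, to something of size roughly $k \cdot \exp\bigl((k-2)(\log(1-\eta)+\eta)\bigr) \asymp k\exp(-(k-2)\eta^2/2) \asymp k\exp(-c\,k^{2\delta})$, which tends to $0$. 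The main obstacle — really the only point requiring care — is bookkeeping the error terms in the expansion of $\log(1-\eta)$ (or of the Chernoff exponent) to confirm that the quadratic term $k^{2\delta}$ genuinely dominates both the polynomial prefactor and the cubic-and-higher corrections; once the case $\delta$ small is handled, monotonicity of $x \mapsto \gamma(k-1,x)$ disposes of all larger $\delta$ at once.
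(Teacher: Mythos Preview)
Your proposal is correct. Both of your approaches---the Chernoff bound for the Gamma tail and the elementary bound $\gamma(k-1,x)\le x\cdot x^{k-2}e^{-x}$ followed by Stirling---lead to the decisive estimate $\gamma(k-1,x)/(k-2)! \ll k^{O(1)}\exp(-c\,k^{2\delta})\to 0$, and your monotonicity remark cleanly reduces large $\delta$ to small $\delta$.

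The paper takes a slightly different route: it invokes the finite-sum identity $\Gamma(k-1,x)=(k-2)!\,e^{-x}\sum_{m=0}^{k-2}x^m/m!$ (valid for integer argument) to rewrite $1-\Gamma(k-1,\epsilon k)/(k-2)!$ as the Poisson tail $e^{-\epsilon k}\sum_{m\ge k-1}(\epsilon k)^m/m!$, then bounds that tail by pulling out the first term and summing the remaining ratio geometrically, finally appealing to Stirling with $\epsilon=1-k^{-1/2+\delta}$. This is morally the Poisson--Gamma dual of your Chernoff argument: you bound $\mathbb{P}(X\le x)$ for $X\sim\mathrm{Gamma}(k-1,1)$, while the paper bounds $\mathbb{P}(Y\ge k-1)$ for $Y\sim\mathrm{Poisson}(x)$, and the two probabilities coincide. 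Your second (integral) approach is arguably the most direct of the three, needing neither the series identity nor any probabilistic language; the paper's approach, on the other hand, avoids the case split on $\delta$ since the geometric-series bound already works uniformly. All three yield the same exponential saving $\exp(-c\,k^{2\delta})$.
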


\begin{proof}
	We shall assume that $ k $ is large enough and for the time being fix an $ \epsilon $ such that $ 0\leq \epsilon <1 $. We start with the following well known expression for the incomplete $ \Gamma $ function valid for integer values of $ k $ \cite[6.5.13]{Abramovich - Stegun},
	\begin{equation}\label{Equation "Incomplete Gamma formula"}
		\Gamma(k,x) = (k-1)! e^{-x} \sum_{m=0}^{k-1} \frac{x^m}{m!}.
	\end{equation}
	This gives us that 
	\begin{align}\nonumber
		\Gamma(k-1, \epsilon k) &= (k-2)! e^{-\epsilon k} \sum_{m=0}^{k-2} \frac{(\epsilon k)^m}{m!}\\ \nonumber
		\Rightarrow \frac{\Gamma(k-1,\epsilon k)}{(k-2)!}&= e^{-\epsilon k} \sum_{m=0}^{k-2} \frac{(\epsilon k)^m}{m!}\\
		&= 1 - e^{-\epsilon k} \sum_{m=k-1}^{\infty}  \frac{(\epsilon k)^m}{m!}. \label{Equation "Gamma bound 1"}
	\end{align}
	Observe that, as long as $ m \geq k-1 $, we have
	\begin{equation}\label{Equation "k^m/m!"}
		\frac{(\epsilon k)^{m+1}}{(m+1)!} = \frac{(\epsilon k)^m}{m!} \frac{\epsilon k}{m+1} \leq \frac{(\epsilon k)^m}{m!}.
	\end{equation}
	Therefore,
	\begin{align*}
		e^{-\epsilon k}\sum_{m=k-1}^{\infty}  \frac{(\epsilon k)^m}{m!} &< e^{-\epsilon k} \frac{(\epsilon k)^{k-1}}{(k-1)!} \sum_{m=0}^{\infty} \frac{(\epsilon k)^m}{\prod_{j=0}^{m} (k+j)}\\
		& < e^{-\epsilon k} \frac{(\epsilon k)^{k-1}}{(k-1)!}  \sum_{m=0}^{\infty} \epsilon^m\\
		&= \left(e^{-\epsilon k} \frac{(\epsilon k)^{k-1}}{(k-1)!}\right) \left(\frac{1}{1- \epsilon}\right)\\
		&= \left(\frac{e^{-k} k^k}{k!}\right)\left(\frac{e^{-\epsilon k}\epsilon^k}{e^{-k}(1- \epsilon)\epsilon}\right).
	\end{align*}
	Now given a choice of $ 1/2 > \delta > 0 $, if we substitute $ \epsilon = 1- k^{-1/2 + \delta}$, then using the Stirling's approximation, the right hand side of the above equality is bounded above by $ k^{-\delta} $ upto a constant independent of $ k $. This completes the proof of the lemma.
\end{proof}

The following lemma is an easy variant of the integral test for convergence of series.

\begin{lemma}\label{Lemma "Lemma Sum Integral"}
	Suppose that $ a(n) $ is a summable sequence of positive real numbers such that $ a(n+1)/a(n) $ is a decreasing sequence. Suppose that $ f:(0,\infty)\to(0,\infty) $ is a continuous function such that $ f(n) = a(n) $ such that $ f $ is increasing on $ (0,\tau) $ and decreasing on $ (\tau,\infty) $ for some $ \tau\in(0,\infty) $. Then for an integer $ k $, we have
	\[
	\sum_{n=k+1}^{\infty} a(n) \leq \max\left\{\frac{a(k+1)}{a(k)},1\right\} \int\limits_{k}^{\infty} f(x) dx.
	\]
\end{lemma}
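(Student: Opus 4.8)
The plan is to exploit the hypothesis that $a(n+1)/a(n)$ is decreasing in order to compare each term $a(n)$ with a nearby value of $f$, and then to split the sum at the mode $\tau$ of $f$. First I would write $\sum_{n=k+1}^{\infty} a(n) = \sum_{k+1 \le n \le \tau} a(n) + \sum_{n > \tau, n \ge k+1} a(n)$, treating the two ranges according to whether $f$ is increasing or decreasing there. On the decreasing range, the standard integral-test inequality $a(n) = f(n) \le \int_{n-1}^{n} f(x)\,dx$ holds whenever $[n-1,n]$ lies in the region where $f$ is decreasing, so those terms are bounded directly by $\int f$. The only terms that cause trouble are those with $n-1 < \tau < n$ (at most one such term) and, if $k+1 \le \tau$, the initial block where $f$ is still increasing; for these I cannot bound $f(n)$ by the integral immediately to its left.

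The key step is to handle the increasing block using the ratio hypothesis. Since $a(n+1)/a(n)$ is decreasing, for every $n \ge k$ we have $a(n+1)/a(n) \le a(k+1)/a(k)$; writing $r := \max\{a(k+1)/a(k), 1\}$, it follows that $a(n+1) \le r\, a(n)$ for all $n \ge k$, hence each term $a(n)$ with $n \ge k+1$ satisfies $a(n) \le r\, a(n-1)$. On the range where $f$ is increasing we instead have $f(n-1) = a(n-1) \ge$ (no — here I want an upper bound), so I would argue: on $(0,\tau)$, $f$ increasing gives $a(n) = f(n) \le \int_{n}^{n+1} f(x)\,dx$ for $n+1 \le \tau$, i.e. I compare to the interval on the \emph{right}. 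Then the single exceptional term straddling $\tau$ is $a(n_0)$ with $n_0 - 1 < \tau < n_0$; this term I bound by $a(n_0) \le r\, a(n_0 - 1) = r\, f(n_0-1) \le r \int_{n_0 - 2}^{n_0 - 1} f$ if $f$ is decreasing there, or fold it into the increasing-range estimate otherwise. Reassembling, every term $a(n)$, $n \ge k+1$, is dominated by $r$ times the integral of $f$ over a unit interval, and these unit intervals (shifted left on the decreasing part, shifted right on the increasing part) are disjoint and contained in $(k,\infty)$; summing gives $\sum_{n=k+1}^{\infty} a(n) \le r \int_k^\infty f(x)\,dx$, which is the claim.

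The main obstacle is the bookkeeping at the junction point $\tau$: ensuring that the leftward-shifted intervals used on the decreasing range and the rightward-shifted intervals used on the increasing range do not overlap, and that the single term whose unit neighbourhood contains $\tau$ is absorbed into the factor $r$ without double-counting any portion of $\int f$. I expect this to be a short but slightly fiddly interval-packing argument; once the intervals are laid out correctly, the inequality $a(n) \le r \int_{I_n} f$ on each piece and disjointness of the $I_n$ finish the proof. The hypotheses that $a$ is summable and $f$ is eventually decreasing guarantee $\int_k^\infty f < \infty$, so the bound is non-vacuous.
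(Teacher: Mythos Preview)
Your overall shape matches the paper's: split at $\tau$, integral-compare on each monotone piece, and use the decreasing-ratio hypothesis to pick up the factor $r=\max\{a(k+1)/a(k),1\}$. The divergence is in which unit interval you assign to $a(n)$ on the increasing block. You shift right, bounding $a(n)=f(n)\le \int_{n}^{n+1}f$ for $n+1\le\tau$; with $m=\lfloor\tau\rfloor$ your intervals then cover $[k+1,m]\cup[m+1,\infty)$ and leave \emph{two} terms $a(m)$ and $a(m+1)$ unassigned (you only flagged the latter). The paper instead shifts \emph{left} on the increasing block as well: for $k<l<\tau$ it writes
\[
a(l)=\frac{a(l)}{a(l-1)}\,f(l-1)\le \frac{a(k+1)}{a(k)}\int_{l-1}^{l}f(x)\,dx,
\]
using that $f$ is increasing on $[l-1,l]$ so $f(l-1)\le\int_{l-1}^{l}f$. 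This is where the ratio hypothesis is actually spent. Now every term gets the interval $[n-1,n]$, these tile $[k,\infty)$ with no gap and no overlap, and the bound drops out.

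Your proposed handling of the junction does not close as a packing argument. The fallback ``$a(n_0)\le r\,f(n_0-1)\le r\int_{n_0-2}^{n_0-1}f$ if $f$ is decreasing there'' never applies, since just left of $\tau$ the function $f$ is increasing; and the disjoint-interval scheme fails outright, because the only leftover unit pieces are $[k,k+1]$ and $[m,m+1]$, while the stray term can dwarf $r$ times either of those integrals. For a concrete obstruction take $f(x)=x$ on $(0,\tau]$ with $\tau=m+\varepsilon$, $f$ dropping off sharply on $(\tau,\infty)$, and $k=1$: then $r=a(2)/a(1)=2$, the stray term is $a(m)=m$, but $r\int_{1}^{2}f=3$ and $r\int_{m}^{m+1}f\approx 2\varepsilon m$, both negligible for large $m$ and small $\varepsilon$. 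So this is not merely bookkeeping; the packing genuinely does not fit. The one-line fix is exactly the paper's move: on the increasing block charge $a(l)$ to $[l-1,l]$ at the cost of the ratio factor, rather than to $[l,l+1]$ for free.
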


\begin{proof}
	If $ k> \tau $, then the lemma is just the integral test for series convergence. Therefore suppose that $ k<\tau $. It follows that, if $ k<  l < \tau $ then 
	\[
	a(l) \leq \frac{a(l)}{a(l-1)}\int\limits_{l-1}^{l} f(x)dx \leq \frac{a(k+1)}{a(k)}\int\limits_{l-1}^{l} f(x)dx.
	\]
	Therefore we have
	\begin{align*}
		\sum_{n=k+1}^{\infty} a(n) &= \sum_{n\leq \tau}a(n) + \sum_{n \geq \tau} a(n)\\
		&\leq \frac{a(k+1)}{a(k)} \int\limits_{k+1}^{\tau} f(x) dx + \int\limits_{\tau}^\infty f(x)dx\\
		&\leq \max\left\{\frac{a(k+1)}{a(k)},1\right\} \int\limits_{k}^{\infty} f(x) dx
	\end{align*}
	as claimed.
\end{proof}

\begin{remark}
	Our primary use of Lemma \ref{Lemma "Lemma Sum Integral"} will be to estimate the sum $ \sum_{n=m}^\infty n^k e^{-\alpha n} $ for some positive constant $ \alpha $ and an integer $ k $. We remark here that Lemma \ref{Lemma "Lemma Sum Integral"} is applicable in this case.
\end{remark}

Define $ \lambda^2_k(n) := a^2_k(n)n^{1-k}$ as above and recall that the symmetric square $ L $ function associated to a cusp form $ f_k $ is given by 
\begin{equation}\label{Equation "Definition Symmetric square L function"}
	L(s, \sym^2(f_k)) = \sum_{n=1}^{\infty} \frac{\lambda_k^2(n)}{n^s}.
\end{equation}
The function $ L(\sym^2(f_k),s) $ is analytic in the half plane $ \Re(s) > 1 $, has a simple pole at the point $ s=1 $ with a residue equal to $ (3/\pi) ((4\pi)^k/(k-1)!) \|f_k\|^2 $, continues to a meromorphic function to the entire complex plane and satisfies a functional equation connecting the values of $ s $ and $ 1-s $. If $ f_k $ is a newform, we have the following bound due to Hoffstein and Lockhart \cite{Hoffstein - Lockhart}.

\begin{equation}\label{Equation "Bound f_k"}
	\|f_k\|^2 = L(1,\sym^2f_k) \frac{2}{\pi} \frac{(k-1)!}{(4\pi)^k} \gg \frac{2}{\pi} \frac{(k-1)!}{(4\pi)^k \log(k)}.
\end{equation}

\section{Proof of Theorem \ref{Theorem "QUE holomorphic"}}\label{Section "Proof of Theorem 1"}

Suppose we let 
\begin{equation}\label{Equation "I_k(T) definition"}
I_k(T) := \frac{1}{\|f_k\|^2}\int\limits_{T}^{\infty} \int\limits_{-\frac{1}{2}}^{\frac{1}{2}}  y^k|f_k(z)|^2 \frac{dxdy}{y^2} = \frac{1}{\|f_k\|^2} \int\limits_{-\frac{1}{2}}^{\frac{1}{2}}\int\limits_{T}^{\infty}  y^k|f_k(z)|^2 \frac{dydx}{y^2}.
\end{equation}	
By Parseval's formula we have,
\begin{align*}
I_k(T) &=\frac{1}{\|f_k\|^2} \int\limits_{T}^{\infty} y^{k} \sum_{n=1}^{\infty} |a_k(n)|^2 e^{-4\pi ny} \frac{dy}{y^2}\\
&= \frac{1}{\|f_k\|^2}\sum_{n=1}^{\infty} |a_k(n)|^2 \int\limits_{T}^{\infty} y^{k} e^{-4\pi ny} \frac{dy}{y^2}\\
&= \frac{1}{\|f_k\|^2}\frac{1}{(4\pi)^{k-1}}\sum_{n=1}^{\infty} \frac{|a_k(n)|^2}{n^{k-1}} \int\limits_{4\pi nT}^{\infty} w^{k} e^{-w} \frac{dw}{w^2},
\end{align*}
after a change of variable.
If we observe that the integral is the incomplete $ \Gamma $ function, we can rewrite the above equation as 
\begin{equation}\label{Equation "I_k(T) definition 2"}
I_k(T) = \frac{1}{\|f_k\|^2}\frac{1}{(4\pi)^{k-1}} \sum_{n=1}^{\infty} |\lambda_k(n)|^2 \Gamma(k-1, 4\pi nT).
\end{equation}
We observe here that $ \lim_{T\to\infty} I_k(T) = 0 $.
First we prove the horizontal equidistribution mentioned in \S \ref{Section "Description of the method"} in Theorem \ref{Theorem "QUE Holomorphic Horizontal"} below.

\begin{theorem}\label{Theorem "QUE Holomorphic Horizontal"}
	For real numbers $ a,b $ such that $ a < b $, we have
	\[
	\lim_{k\to\infty} \frac{1}{\|f_k\|^2} \int\limits_{T}^{\infty} \int\limits_{a}^{b}  y^k|f_k(z)|^2 \frac{dxdy}{y^2} = (b-a) \lim_{k\to\infty} I_k(T)
	\]
	The above statement is true independent of choice of $ f_k $'s and the rate of convergence depends on $ T $.
\end{theorem}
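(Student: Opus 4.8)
The plan is to expand $|f_k(z)|^2$ into its double Fourier series and isolate the diagonal terms, which are exactly what produces $I_k(T)$, from the off-diagonal terms, which contribute the oscillating factor $\int_a^b e^{2\pi i(m-n)x}\,dx$ and should be negligible after summing. Writing $f_k(z) = \sum_{n\geq 1} a_k(n) e^{2\pi i n z}$, we have $y^k|f_k(z)|^2 = y^k \sum_{m,n\geq 1} a_k(m)\overline{a_k(n)} e^{2\pi i(m-n)x} e^{-2\pi(m+n)y}$. Integrating in $x$ over $[a,b]$ gives $(b-a)$ times the diagonal contribution $\sum_n |a_k(n)|^2 e^{-4\pi n y}$ plus, for each pair $m\neq n$, the factor $\frac{e^{2\pi i(b-a)(m-n)} - 1}{2\pi i(m-n)}$, which is bounded by $\min\{b-a, \pi^{-1}|m-n|^{-1}\}$. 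The diagonal part, after integrating in $y$ over $(T,\infty)$ and dividing by $\|f_k\|^2$, is precisely $(b-a) I_k(T)$. So the theorem reduces to showing that the off-diagonal sum
\[
\frac{1}{\|f_k\|^2} \sum_{\substack{m,n\geq 1 \\ m\neq n}} a_k(m)\overline{a_k(n)}\, \frac{e^{2\pi i(b-a)(m-n)}-1}{2\pi i(m-n)} \int\limits_T^\infty y^{k-2} e^{-2\pi(m+n)y}\,dy
\]
tends to $0$ as $k\to\infty$, uniformly in the choice of $f_k$.

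For the off-diagonal estimate I would first perform the $y$-integral exactly: $\int_T^\infty y^{k-2}e^{-2\pi(m+n)y}\,dy = (2\pi(m+n))^{-(k-1)}\Gamma(k-1, 2\pi(m+n)T)$. The incomplete $\Gamma$ factor together with the $(m+n)^{-(k-1)}$ decay forces the effective range of summation to be $m+n \asymp k/(2\pi T)$, i.e. both $m,n = O(k)$; beyond that, $\Gamma(k-1,x)/x^{k-1}$ decays super-polynomially (this is the same slowing-down-of-exponential-decay phenomenon flagged in \S\ref{Section "Description of the method"}, controlled via Lemma \ref{Lemma "Gamma function"} and Lemma \ref{Lemma "Lemma Sum Integral"}). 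Within that range I would bound $|a_k(m)|, |a_k(n)|$ using the available (partial Ramanujan-type) bound on Fourier coefficients — $|a_k(n)| \ll n^{(k-1)/2} d(n) \cdot (\text{small power of } k)$ or whatever uniform bound the paper has set up — and use the extra saving $|m-n|^{-1}$ from the $x$-integral to handle the off-diagonal nature. Crucially, since $\|f_k\|^{-2} \ll (4\pi)^k \log k /(k-1)!$ by \eqref{Equation "Bound f_k"}, and the diagonal $I_k(T)$ is of size comparable to $1$, I need the off-diagonal sum to beat the diagonal by a power of $k$ (or at least a power of $\log k$); the $|m-n|^{-1}$ factor and the shifted-convolution-type cancellation should supply this.

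The main obstacle is precisely this off-diagonal bound: a naive absolute-value estimate of $\sum_{m\neq n}$ loses the oscillation in the $a_k(m)\overline{a_k(n)}$ product and only barely fails, exactly as the paper notes the convexity bound "barely fails." I expect to need either (i) to exploit the $\frac{1}{|m-n|}$ decay together with a dyadic decomposition in $|m-n|$, reducing matters to a shifted convolution sum $\sum_{n} a_k(n)\overline{a_k(n+h)} e^{-2\pi(2n+h)y}$ for which partial results toward the shifted convolution problem (uniform in $k$) give the needed small power saving, or (ii) to recognize that the $x$-integral over the full period would kill all off-diagonal terms, and that over a subinterval $[a,b]$ the defect is a boundary term that can be handled by an Abel summation / partial summation argument in $m-n$. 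Either way, the delicate point is maintaining uniformity in $k$ throughout — the paper has emphasized that "the uniformity of the bounds is as important as the strength of the bound itself," so I would track all implied constants carefully and invoke only the uniform partial Ramanujan and uniform shifted-convolution inputs. Once the off-diagonal sum is shown to be $o(1)$ (with a rate depending on $T$ through the location $k/(2\pi T)$ of the effective summation range), the theorem follows immediately.
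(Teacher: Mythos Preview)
Your setup coincides with the paper's: expand $|f_k|^2$ as a double Fourier series, identify the diagonal as $(b-a)I_k(T)$, and reduce to showing the off-diagonal sum is $o(1)$ using the $|m-n|^{-1}$ saving from the $x$-integral together with the localisation $m+n\asymp k/(2\pi T)$ coming from the incomplete Gamma weight.

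Where you diverge is in the execution of the off-diagonal bound. You anticipate needing cancellation in the shifted sums $\sum_n a_k(n)\overline{a_k(n+h)}$, i.e.\ genuine shifted-convolution input. The paper never uses any such cancellation: the entire off-diagonal estimate is carried out in absolute value, using only the Ramanujan bound $|a_k(n)|\ll n^{(k-1)/2+\epsilon}$. What makes this work is a rather elaborate multi-range decomposition. First the shift $l=n-m$ is split at $l\asymp\log k$; the large-shift piece is handled by iterating the identity $(n+l)^\delta/l=(n+l)^{\delta-1}(1+n/l)$ (with $\delta=(k-1)/2+\epsilon$) roughly $k/2$ times and then integrating in $y$. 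For small shifts $l\ll\log k$ the paper splits $n$ into five further ranges --- a far tail $n>k+k^{1/2+\epsilon}$, a short transition window around $n\approx k$, a bulk range $c_1k<n<k-k^{3/4+\epsilon}$, an intermediate range $c_2\log k<n<c_1k$, and a head $n<c_2\log k$ --- and bounds each piece separately via upper or lower incomplete Gamma asymptotics (Lemma~\ref{Lemma "Gamma function"} and the arguments of \S\ref{Section "Mean Values in short intervals"}). So your diagnosis that ``a naive absolute-value estimate barely fails'' is correct, but the remedy in the paper is finer range-splitting rather than shifted-convolution cancellation; your option~(i) with actual shifted-convolution bounds would also work but is a heavier hammer than what is used.
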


\begin{proof}
	Let $ a < b $ be two real numbers. Fix $ y > 0 $ and consider the integral 
	\[
	\int\limits_{a}^{b} |f_k(z)|^2 dx.
	\]
	
	We have
	\begin{align*}
		\int\limits_{a}^{b}& |f_k(z)|^2 dx = (b-a) \sum_{n=1}^{\infty} a_k^2(n) e^{-4\pi ny} + \sum_{n\neq m} a_k(n)a_k(m)e^{-2\pi(n+m)y}\int\limits_{a}^{b} e^{2\pi i(n-m)x}dx\\
		&=(b-a) \sum_{n=1}^{\infty} a_k^2(n) e^{-4\pi ny} + \sum_{n > m\geq 1} a_k(n)a_k(m)e^{-2\pi(n+m)y}\left(\int\limits_{a}^{b} e^{2\pi i(n-m)x}dx + \int\limits_{a}^{b} e^{2\pi i(m-n)x}dx\right)\\
		&=(b-a) \sum_{n=1}^{\infty} a_k^2(n) e^{-4\pi ny} + \sum_{n > m\geq 1} a_k(n)a_k(m)e^{-2\pi(n+m)y}\left(\int\limits_{a}^{b}\cos(2\pi (n-m)x)dx\right).
	\end{align*}
	In light of \eqref{Equation "I_k(T) definition 2"} the first sum should be thought as the ``main term" and the second sum should be thought of as the ``error term". Observe that the proof of theorem will be complete if we show the second sum decays to zero with $ k $. Let us consider the second summation in the previous equation.
	\begin{align*}
		&\left|\sum_{n > m \geq 1} a_k(n)a_k(m)e^{-2\pi(n+m)y}  \left(\int\limits_{a}^{b}\cos(2\pi (n-m)x)dx\right)\right|\\
		&\ll \sum_{n > m \geq 1} \left|\frac{a_k(n)a_k(m)e^{-2\pi(n+m)y}}{2\pi (n-m)} \right|.
	\end{align*}

Suppose we bound $ |a_k(n)| $ by $ n^\delta $. Then the above summation can be rewritten as
\begin{align*}
	\sum_{n=1}^{\infty} n^\delta e^{-4\pi ny}&\sum_{l=1}^{\infty} \frac{(n+l)^{\delta}}{l} e^{-2\pi ly} = \sum_{n=1}^{\infty} n^\delta e^{-4\pi ny}\sum_{l=1}^{\infty} (n+l)^{\delta-1}\left(1 + \frac{n}{l}\right) e^{-2\pi ly}\\
	&=\sum_{n=1}^{\infty} n^\delta e^{-4\pi ny}\sum_{l=1}^{\infty} (n+l)^{\delta-1} e^{-2\pi ly} + \sum_{n=1}^{\infty} n^{\delta+1} e^{-4\pi ny}\sum_{l=1}^{\infty} \frac{(n+l)^{\delta-1}}{l} e^{-2\pi ly}.
\end{align*}
Simplifying the second term similarly we get that
\begin{multline}\label{Equation "Tail horizontal"}
	\sum_{n=1}^{\infty} n^\delta e^{-4\pi ny}\sum_{l=1}^{\infty} \frac{(n+l)^{\delta}}{l} e^{-2\pi ly} = \\
	\sum_{j=0}^{r} \sum_{n=1}^{\infty} n^{\delta+j} e^{-4\pi ny}\sum_{l=1}^{\infty} (n+l)^{\delta-1-j} e^{-2\pi ly} + \sum_{n=1}^{\infty} n^{\delta+1+r} e^{-4\pi ny}\sum_{l=1}^{\infty} \frac{(n+l)^{\delta-1-r}}{l} e^{-2\pi ly},
\end{multline}
for any integer $ r \geq 0 $. Considering the first term, we have
\begin{align*}
	\sum_{j=0}^{r} \sum_{n=1}^{\infty} n^{\delta+j} e^{-4\pi ny}\sum_{l=1}^{\infty} (n+l)^{\delta-1-j} e^{-2\pi ly}&= \sum_{n=1}^{\infty} n^{\delta} e^{-4\pi ny}\sum_{l=1}^{\infty} (n+l)^{\delta-1} e^{-2\pi ly} \sum_{j=0}^{r} \left(\frac{n}{n+l}\right)^j\\
	&\leq \sum_{n=1}^{\infty} n^{\delta-1} e^{-4\pi ny}\sum_{l=1}^{\infty} (n+l)^{\delta} e^{-2\pi ly}\\
	&\ll \frac{1}{(2\pi y)^{2\delta-1}}\Gamma(\delta)\Gamma(\delta+1)
\end{align*}

To evaluate the second term, if $ l \geq \log(k)/2\pi $, then we see that
\[
\sum_{n=1}^{\infty} n^{\delta+1+r} e^{-4\pi ny}\sum_{l=\frac{\log(k)}{2}}^{\infty} \frac{(n+l)^{\delta-1-r}}{l} e^{-2\pi ly} \ll \frac{e^{-\log(k) y}}{\log(k) (4\pi y)^k}\Gamma(\delta + r + 2).
\]

Suppose we choose $ r = k/2 $ and $ \delta = k/2-1/2+\epsilon $, we have $ \delta - 1 - r = -1/2+\epsilon $ and $ \delta + 1 + r = k-1/2+\epsilon $. With these choices of $ r,\delta $, the right hand side of \eqref{Equation "Tail horizontal"}, for $ l\geq \log(k)/2\pi $ is bounded above by
\[
\ll \frac{\Gamma\left(\frac{k-1}{2} + \epsilon\right)\Gamma\left(\frac{k+1}{2} + \epsilon\right)}{(2\pi y)^k} + \frac{e^{-\log(k) y}}{\log(k) (4\pi y)^k}\Gamma\left(k+\frac{1}{2}+\epsilon\right).
\]
Integrating with respect to $ y^{k-2}dy $, dividing by $ \|f_k\|^2 $ and using \eqref{Equation "Bound f_k"}, we can bound the right hand side above by
\begin{equation}\label{Equation "Horizontal Tail bound 1"}
	\ll_T \frac{2^k\Gamma\left(\frac{k-1}{2} + \epsilon\right)\Gamma\left(\frac{k+1}{2} + \epsilon\right)}{\Gamma(k)} + \frac{e^{- \log(k)}}{\log(k)}\frac{\Gamma\left(k+\frac{1}{2}+\epsilon\right)}{\Gamma(k)},
\end{equation}
which clearly goes to zero with $ k $. We are left to bound the sum
\[
\left|\sum_{n=1}^\infty a_k(n)e^{-4\pi ny}\sum_{l=1}^{\log(k)/2\pi} a_k(n+l) e^{-2\pi ly}  \left(\int\limits_{a}^{b}\cos(2\pi lx)dx\right)\right|.
\]
We have
\begin{align*}
	\sum_{n=1}^{\infty}& a_k(n)e^{-4\pi ny}\sum_{l=1}^{\log(k)/2\pi} a_k(n+l) e^{-2\pi ly}  \left(\int\limits_{a}^{b}\cos(2\pi lx)dx\right) \\
	&= \sum_{l=1}^{\log(k)/2\pi} e^{-2\pi ly}\left(\int\limits_{a}^{b}\cos(2\pi lx)dx\right) \sum_{n=1}^{\infty} a_k(n) a_k(n+l)e^{-4\pi ny}\\
	&\ll \sum_{l=1}^{\log(k)/2\pi} \frac{e^{-2\pi ly}}{l}\left|\sum_{n=1}^{\infty} a_k(n) a_k(n+l)e^{-4\pi ny}\right|.
\end{align*}
In order to truncate the sum further, fix a small $ \epsilon > 0 $ and consider
\[
\left|\sum_{n = k + k^{\frac{1}{2}+\epsilon}}^{\infty} a_k(n) a_k(n+l)e^{-4\pi ny}\right| \leq \sum_{n = k + k^{\frac{1}{2}+\epsilon}}^{\infty} |a_k(n) a_k(n+l)| e^{-4\pi ny}.
\]
Integrating with respect to $ y^{k-2} dy$ and dividing by $ \|f_k\|^2 $ we are left with the expression
\begin{align*}
	\frac{1}{\|f_k\|^2} \frac{1}{(4\pi)^{k-1}} &\sum_{n = k + k^{\frac{1}{2}+\epsilon}}^{\infty} \frac{|a_k(n) a_k(n+l)|}{n^{k-1}} \Gamma(k-1, 4\pi nT)\\
	&\ll \frac{1}{\|f_k\|^2} \frac{1}{(4\pi)^{k-1}} \sum_{n = k + k^{\frac{1}{2}+\epsilon}}^{\infty} n^{2\epsilon} \left(1 + \frac{l}{n}\right)^{\frac{k-1}{2}+\epsilon} \Gamma(k-1, 4\pi nT)\\
	&\ll \left(1+\frac{\log(k)}{2(k+k^{\frac{1}{2}+\epsilon})}\right)^{\frac{k-1}{2}+\epsilon} \frac{1}{\|f_k\|^2} \frac{1}{(4\pi)^{k-1}} \sum_{n = k + k^{\frac{1}{2}+\epsilon}}^{\infty} n^{2\epsilon} \Gamma(k-1, 4\pi nT).
\end{align*}
Now the right hand side can be shown to go to zero similar to how $ E_k(T) $ (see \eqref{Equation "S_2 definition"}) is shown to vanish in \S \ref{Section "Mean Values in short intervals"} below. Similarly we have
\begin{multline}\label{Equation "Limiting range"}
	\left|\sum_{n = k - k^{\frac{3}{4}+\epsilon}}^{k + k^{\frac{1}{2}+\epsilon}} a_k(n) a_k(n+l)e^{-4\pi ny}\right|\\
	\ll \left(1+\frac{\log(k)}{2\pi(k-k^{\frac{1}{2}+\epsilon})}\right)^{\frac{k-1}{2}+\epsilon} \frac{1}{\|f_k\|^2} \frac{1}{(4\pi)^{k-1}} \sum_{n = k - k^{\frac{3}{4}+\epsilon}}^{k + k^{\frac{1}{2}+\epsilon}} n^{2\epsilon} \Gamma(k-1, 4\pi nT)
\end{multline}
which goes to zero with $ k $. This leaves us with the sum 
\[
\sum_{l=1}^{\log(k)/2\pi} e^{-2\pi ly}\left(\int\limits_{a}^{b}\cos(2\pi lx)dx\right) \sum_{n=1}^{k - k^{\frac{3}{4}+\epsilon}} a_k(n) a_k(n+l)e^{-4\pi ny}.
\]
Applying the Ramanujan bound we can bound this sum by
\[
\sum_{l=1}^{\frac{\log(k)}{2\pi}} \frac{e^{-2\pi ly}}{l} \sum_{n=1}^{k-k^{\frac{3}{4}+\epsilon}} n^{2\delta} \left(1+\frac{l}{n}\right)^\delta e^{-4\pi ny}.
\]
If $ n \geq c_1 k $ for some constant $ c_1 $, then we have 
\begin{align*}
	\sum_{l=1}^{\frac{\log(k)}{2\pi}} \frac{e^{-2\pi ly}}{l} \sum_{n= c_1 k}^{k-k^{\frac{3}{4}+\epsilon}} n^{2\delta} \left(1+\frac{l}{n}\right)^\delta e^{-4\pi ny} &\ll \sum_{n= c_1 k}^{k-k^{\frac{3}{4}+\epsilon}} n^{2\delta} \left(1+\frac{\log(k)}{2\pi n}\right)^\delta e^{-4\pi ny}\\
	&\ll k^{\frac{1}{4\pi c_1}}\sum_{n=c_1k}^{k-k^{\frac{3}{4}+\epsilon}} n^{2\delta} e^{-4\pi ny}\\
	&\ll_T k^{\frac{1}{4\pi c_1}}\frac{\gamma(k+\epsilon, k-k^{\frac{3}{4}+\epsilon})}{\Gamma(k)}\\
	&\ll_T k^{\frac{1}{4\pi c_1} - \frac{1}{4}+\epsilon},
\end{align*}
where the second to last inequality is obtained after integrating with respect to $ y^{k-2}dy $ and dividing by $ \|f_k\|^2 $. Choosing $ c_1 $ close enough to $ 1 $ we see that the right hand side goes to zero with $ k $. If $ c_2 \log(k) \leq n\leq c_1k $, then we have
\begin{align*}
	\sum_{n= c_2\log(k)}^{c_1 k} n^{2\delta} \left(1+\frac{\log(k)}{2n}\right)^\delta e^{-4\pi ny} &\ll \left(1 + \frac{1}{2c_2}\right)^\delta \sum_{n= c_2\log(k)}^{c_1 k} n^{2\delta} e^{-4\pi ny}\\
	&\ll_T \left(1 + \frac{1}{2c_2}\right)^\delta \left(\frac{e^{-c_1 k}c_1^k}{e^{-k}(1- c_1)c_1}\right) p(k)\\
	&\ll_T \left(1 + \frac{1}{2c_2}\right)^\delta \frac{(e^{1-c_1}c_1)^k}{(1-c_1)c_1} p(k)
\end{align*}
where again the second inequality is obtained after integrating with respect to $ y^{k-2} dy $ and dividing by $ \|f_k\|^2 $, and $ p(k) $ is a polynomial on $ k $. For a given value of $ c_1 $, choosing $ c_2 $ large enough, we see that the right hand side has exponential decay in $ k $.

This leaves us with the sum 
\begin{align*}
	\sum_{l=1}^{\frac{\log(k)}{2}} \frac{e^{-2\pi ly}}{l} \sum_{n= 1}^{c_2\log(k)} n^{2\delta} \left(1+\frac{l}{n}\right)^\delta e^{-4\pi ny} &\ll \sum_{l=1}^{\frac{\log(k)}{2}} \frac{(1+l)^\delta e^{-2\pi ly}}{l} \sum_{n= 1}^{c_2\log(k)} n^{2\delta} e^{-4\pi ny}\\
	&\ll \frac{\gamma(\delta+1, \log(k)/2) \gamma(2\delta + 1, c_2 \log(k)) }{(2\pi y)^{\delta +1} (4\pi y)^{2\delta +1}}\\
	&\ll_T \frac{\gamma(\delta+1, \log(k)/2) \gamma(2\delta + 1, c_2 \log(k))}{(2\pi)^{\delta} \Gamma(k)},
\end{align*}
where the right hand side clearly goes to zero with $ k $.

This completes the proof of Theorem \ref{Theorem "QUE Holomorphic Horizontal"}.

\end{proof}

Next we shall prove the vertical equidistribution in Theorem \ref{Theorem "QUE Holomorphic Vertical"} below. It can also be thought of as the analogue of the ``no escape of mass" result of Soundararajan \cite{Sound Escape of Mass}.

\begin{theorem}\label{Theorem "QUE Holomorphic Vertical"}
	For any $ T > 0 $, we have 
	\[
	\lim_{k\to\infty} \frac{1}{\|f_k\|^2} \int\limits_{T}^{\infty} \int\limits_{-\frac{1}{2}}^{\frac{1}{2}}  y^k|f_k(z)|^2 \frac{dxdy}{y^2} = \frac{3}{\pi T}.
	\]
	The above statement is true independent of choice of $ f_k $'s and the rate of convergence depends on $ T $.
\end{theorem}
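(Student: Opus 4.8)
The plan is to identify every weak-$*$ limit of the probability measures $\mu_k$ on the one-point compactification $\mathfrak{X}^{*}=\mathfrak{X}\cup\{\infty\}$ and to show that it equals $\tfrac{3}{\pi}\,\tfrac{dx\,dy}{y^{2}}$. Since $\mathfrak{S}(-1/2,1/2,T)\subset\mathfrak{F}$ for $T\ge1$ and has hyperbolic volume $\int_T^\infty y^{-2}\,dy=1/T$, the quantity in the statement is $\mu_k\bigl(\mathfrak{S}(-1/2,1/2,T)\bigr)$, so this will give the theorem. The only modularity input used will be the $SL(2,\Z)$-invariance of $y^{k}|f_k(z)|^{2}\,dz$, together with Theorem \ref{Theorem "QUE Holomorphic Horizontal"}.

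First, Parseval in $x$ gives $\int_{-1/2}^{1/2}|f_k(x+iy)|^{2}\,dx=\sum_{n\ge1}a_k(n)^{2}e^{-4\pi ny}$ for every $y>0$, which is formula \eqref{Equation "I_k(T) definition 2"}; in particular $T\mapsto I_k(T)$ is smooth, strictly decreasing, tends to $0$ at infinity for each fixed $k$, and $I_k(T)\le1$ for $T\ge1$. Pass to a subsequence along which $I_k(T)\to\ell(T)$ for every rational $T>0$ and $\mu_k\to\mu+c\,\delta_\infty$ weakly-$*$ on $\mathfrak{X}^{*}$ with $c\ge0$. Applying Theorem \ref{Theorem "QUE Holomorphic Horizontal"} to a bounded rectangle $(a,b)\times(T,T')$ with $(a,b)\subseteq(-1/2,1/2)$ shows its $\mu$-mass is $(b-a)\bigl(\ell(T)-\ell(T')\bigr)$; hence the lift $\widetilde\mu$ of $\mu$ to an $SL(2,\Z)$-invariant Radon measure on $\h$ is absolutely continuous with a density $\rho(y)$ depending only on $y$, and $\ell(T)-\ell(T')=\int_T^{T'}\rho$. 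Now impose invariance of $\widetilde\mu$ under $z\mapsto-1/z$: this map has real Jacobian $|z|^{-4}$ and carries $\Im z$ to $\Im z/|z|^{2}$, so $\rho(y)=(x^{2}+y^{2})^{-2}\rho\bigl(y/(x^{2}+y^{2})\bigr)$ for a.e.\ $(x,y)$. Fixing $y$ and varying $x$, the argument $y/(x^{2}+y^{2})$ sweeps out $(0,1/y]$, and (chaining over $y$) this identity forces $y^{2}\rho(y)$ to be a constant $C$. Thus $\widetilde\mu=C\,dx\,dy/y^{2}$, so $\mu=C\,dx\,dy/y^{2}$ on $\mathfrak{X}$ and $\ell(T)=C/T$.

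It remains to pin down $C$. Each $\mu_k$ is a probability measure on $\mathfrak{X}$ since $\|f_k\|^{-2}\int_{\mathfrak{X}}y^{k}|f_k|^{2}\,dz=1$; granting that no mass escapes into the cusp, i.e.\ $c=0$, we get $C\cdot\mathrm{vol}(\mathfrak{X})=C\pi/3=1$, so $C=3/\pi$ and $\ell(T)=3/(\pi T)$; as the subsequence was arbitrary, $\lim_{k}I_k(T)=3/(\pi T)$. The point requiring real work --- and the main obstacle --- is precisely this no-escape-of-mass statement: the soft reasoning above only yields $\ell(T)=C/T+\ell_\infty$ with $0\le\ell_\infty\le c$ and $C\pi/3+c=1$, which does not by itself force $c=0$. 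I would establish it via the uniform bound $\sup_{k}I_k(T)=O(1/T)$: in \eqref{Equation "I_k(T) definition 2"} split the series at $n\asymp k/(4\pi T)$; the tail is negligible because $\Gamma(k-1,\cdot)$ decays rapidly past its transition point (using Lemma \ref{Lemma "Gamma function"} and the tail estimates already appearing in the proof of Theorem \ref{Theorem "QUE Holomorphic Horizontal"}), while the head is handled with a Rankin--Selberg upper bound $\sum_{n\le X}\lambda_k^{2}(n)\ll X\,L(1,\sym^{2}f_k)$ valid for $X\asymp k$ (which, unlike the sharp asymptotic, requires no subconvexity) together with the lower bound \eqref{Equation "Bound f_k"}. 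Finally, combining Theorems \ref{Theorem "QUE Holomorphic Horizontal"} and \ref{Theorem "QUE Holomorphic Vertical"} on characteristic functions of rectangles yields Theorem \ref{Theorem "QUE holomorphic"}.
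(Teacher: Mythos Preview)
Your route is genuinely different from the paper's. The paper works pointwise in $k$: from Theorem~\ref{Theorem "QUE Holomorphic Horizontal"} it writes $I_k'(T)=-\|f_k\|^{-2}|f_k(x+iT)|^2T^{k-2}+o(1)$, then uses the $SL(2,\Z)$-invariance of $y^k|f_k|^2$ at specific matrices (bottom row $(q,p)$ with $p,q$ primes $\equiv1\pmod4$, together with a representation $2pq=c^2+d^2$) to obtain $I_k'(T)=I_k'(1)/T^2+o(1)$ on a dense set, integrates, and finally pins down $-I_k'(1)\to3/\pi$ by a Riemann-sum computation over the boundary arc of~$\mathfrak F$. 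Your argument passes to weak-$*$ limits on $\mathfrak X^*$, shows via Theorem~\ref{Theorem "QUE Holomorphic Horizontal"} that the limit has a density $\rho(y)$, and then uses invariance under $z\mapsto-1/z$ to force $y^2\rho(y)=C$; normalisation then gives $C=3/\pi$ once there is no escape of mass. This is cleaner and more conceptual, and the inversion-invariance step replaces the paper's somewhat ad hoc prime construction.

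The gap is in your proposed no-escape-of-mass argument. The bound you invoke, $\sum_{n\le X}\lambda_k^2(n)\ll X\,L(1,\sym^2 f_k)$ uniformly for $X\asymp k$, is \emph{not} available without subconvexity-type input: by convexity for $L(s,\sym^2 f_k)$ one gets at best $\sum_{n\le X}\lambda_k^2(n)\ll L(1,\sym^2 f_k)X+X^{1/2}k^{1/2+\epsilon}$, and for $X\asymp k$ the error $k^{1+\epsilon}$ is not dominated by $kL(1,\sym^2 f_k)$ since Hoffstein--Lockhart only gives $L(1,\sym^2 f_k)\gg1/\log k$. Indeed the paper singles out this very range as the obstruction (see the paragraph following Corollary~\ref{Corollary "Mean Values"}), and Corollary~\ref{Corollary "Mean Values"} itself is \emph{deduced from} Theorem~\ref{Theorem "QUE holomorphic"} in \S\ref{Section "Mean Values in short intervals"}, not used as an input. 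So as written your argument is circular at exactly the crucial point. The paper avoids this by a different mechanism: it claims the $o(1)$ errors in the horizontal step carry an explicit $1/T$-type decay, so that after integrating one can send $T\to\infty$ \emph{for each fixed $k$} and use $I_k(\infty)=0$ to kill the constant of integration, without ever needing a uniform-in-$k$ bound on $I_k(T)$. If you want to salvage your approach, that $T$-decay of the error in Theorem~\ref{Theorem "QUE Holomorphic Horizontal"} is what you should exploit, not a Rankin--Selberg partial-sum bound.
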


\begin{proof}
	From Theorem \ref{Theorem "QUE Holomorphic Horizontal"} above, we can write
	\[
	\frac{1}{\|f_k\|^2} \int\limits_{a}^{b} \int\limits_{T}^{\infty}  y^k|f_k(z)|^2 \frac{dydx}{y^2} = (b-a) \frac{1}{\|f_k\|^2}  \int\limits_{-1/2}^{1/2} \int\limits_{T}^{\infty} y^k|f_k(z)|^2 \frac{dydx}{y^2} + o(1).
	\]
	For large enough $ k $, we can write
	\begin{equation}\label{Equation "I_k(T) definition alternate"}
		I_k(T) = \frac{1}{\|f_k\|^2}\int\limits_{T}^\infty |f_k(x+iy)|^2 y^{k-2} dy + o(1).
	\end{equation}
	Observe that \eqref{Equation "I_k(T) definition alternate"} is valid for every $ x\in\R $ (strictly speaking the error term depends on $ x $ as well but since $ f_k $ is periodic with resepct to $ x $ and the possible inequivalent choices for $ x $ is bounded, therefore we omit that dependence). We shall consider the integral in \eqref{Equation "I_k(T) definition alternate"}, more precisely we shall consider its derivative. To that end on analyzing the error term occuring in \eqref{Equation "I_k(T) definition alternate"}, the derivative of the error term in \eqref{Equation "I_k(T) definition alternate"} with respect to $ T $ is given by
	\[
	\sum_{n > m\geq 1} a_k(n)a_k(m)e^{-2\pi(n+m)T}\left(\int\limits_{a}^{b}\cos(2\pi (n-m)x)dx\right),
	\] 
	which can be bounded along the same lines as the bounding of the error term in the proof of Theorem \ref{Theorem "QUE Holomorphic Horizontal"}.
	
	Therefore we can write
	\[
	I_k'(T) = - \frac{|f_k(x+iT)|^2T^{k-2}}{\|f_k\|^2} + o(1).
	\]
	From the modularity of $ f_k $, we have for every $ \gamma = \begin{pmatrix} a&b\\c&d\end{pmatrix}\in SL_2(\Z) $, 
	\begin{equation}\label{Equation "I_k' modularity"}
		I_k'\left(\Im(\gamma (iT))\right) = (c^2T^2 + d^2)^2 I_k'(T) + o(1).
	\end{equation}
	Now choose two primes $ p,q \equiv 1\mod 4 $ and consider their quotient $ p/q $. The set of all such quotients is dense in $ \R $ (see \cite{Micholson}). Choosing $ \gamma\in SL_2(\Z) $ such that the bottom row equals $ (q,p) $ we get
	\[
	I_k'\left(\frac{p}{q}\right) = \frac{1}{(2p^2)^2} I_k'\left(\frac{1}{2pq}\right).
	\]
	From the choice of $ p,q $, we can find two coprime integers $ c,d $ such that $ c^2+d^2 = 2pq $. Using \eqref{Equation "I_k' modularity"} again with $ T=1 $ and $ c,d $ as above we see that
	\[
	I_k'\left(\frac{p}{q}\right) = \frac{q^2}{p^2} I_k'(1) + o(1).
	\]
	From the continuity of $ I_k' $ we have
	\[
	I_k'(T) = \frac{I_k'(1)}{T^2} + o(1).
	\]
	Integrating we see that 
	\[
	I_k(T) = -\frac{I_k'(1)}{T} + c_k + o(1).
	\]
	On close examination of the proof of Theorem \ref{Theorem "QUE Holomorphic Horizontal"}, we see that the error term is inversely proportional to $ T $. Therefore letting $ T\to\infty $, we see that $ c_k = 0 $, finally giving us
	\[
	I_k(T) = - \frac{I_k'(1)}{T} + o(1).
	\]
	Our aim is to let $ k\to\infty $, but a priori it is unknown whether the sequence $ \{I_k'(1)\} $ is convergent or not. But since the sequence $ I_k'(1) $ is bounded, we can choose a convergent subsequence. The arguments that follow are valid for any convergent subsequence and therefore we show that the sequence $ \{I_k'(1)\} $ itself is convergent to $ -3/\pi $. With this understanding, without loss of generality we assume that $ I_k'(1) $ is convergent and converges to a constant $ c $. To show that $ c=-3/\pi $, we can proceed as in \cite{Sound Escape of Mass}. Here we present another simpler argument.
	
	We observe that
	\[
	\frac{1}{\|f_k\|^2} \int\limits_{\mathfrak{F}} |f_k(z)|^2 y^{k} dz  = 1,
	\]
	where $ \mathfrak{F} $ denotes the standard fundamental domain. We can approximate the integral over the fundamental domain by integrals over generalized Siegel domains with arbitrarily small widths. With the standard fundamental domain in mind we have, for every $ n \gg 0 $,
	\begin{align*}
		\frac{1}{2}\frac{1}{\|f_k\|^2}\int\limits_{\mathfrak{F}} |f(z)|^2 y^{k} dz &= \sum_{m=0}^{n-1} \int\limits_{-\frac{1}{2}+\frac{m}{2n}}^{-\frac{1}{2} + \frac{m+1}{2n}} \int\limits_{\sqrt{1-(-\frac{1}{2}+\frac{m}{2n})^2}}^{\infty} |f(z)|^2 y^k \frac{dydx}{y^2} + o(1)\\
		&= \frac{1}{2n} \sum_{m=0}^{n-1} I_k\left(\sqrt{1-\left(-\frac{1}{2}+\frac{m}{2n}\right)^2}\right) + o(1)\\
		&= \frac{-I_k'(1)}{2n} \left(\sum_{m=0}^{n-1} \frac{1}{\sqrt{1-\left(-\frac{1}{2}+\frac{m}{2n}\right)^2}}\right) + o(1).
	\end{align*}
	Letting $ n $ go to infinity on the right hand side we get,
	\begin{align*}
		1 +o(1) &= -I_k'(1) \int\limits_{0}^{1} \frac{dt}{\sqrt{1-\left(-\frac{1}{2} + \frac{t}{2}\right)^2}}\\
		&= -I_k'(1) \frac{\pi}{3}.
	\end{align*}
Letting $ k\to\infty $ completes the proof of the theorem.
\end{proof}

Now we are ready to prove Theorem \ref{Theorem "QUE holomorphic"}.

\begin{proof}[Proof of Theorem \ref{Theorem "QUE holomorphic"}]
	Observe that it is sufficient to show \eqref{Equation "QUE holomorphic theorem"} for a dense\footnote{See \S \ref{Section "Description of the method"}} subset of $ C_c^{\infty} (\mathfrak{X}) $. We shall show that \eqref{Equation "QUE holomorphic theorem"} holds for a dense subset of $ L^1(\mathfrak{X}) $.
	
	Until the end of this proof, we shall restrict ourselves to rectangles which are contained inside $ \mathfrak{F} $. Let $ \mathfrak{D} $ denote the set of characteristic functions of rectangles $ \mathcal{R}(z_1,z_2) $. It is clear that $ \mathfrak{D} $ is dense in $ L^1(\mathfrak{X}) $. Furthermore from Theorems \ref{Theorem "QUE Holomorphic Vertical"} and \ref{Theorem "QUE Holomorphic Horizontal"} it is clear that Theorem \ref{Theorem "QUE holomorphic"} holds for every element in $ \mathfrak{D} $.
		
	By abuse of notation let $ \mathfrak{D} $ denote the smallest subspace containing the characteristic functions of rectangles. Let $ \psi\in C_c^\infty(\mathfrak{X}) $, then there exists a sequence of functions $ \rho_i\in \mathfrak{D} $ such that $ \|\rho_i - \psi\|_1\to 0 $. Since $ \psi $ is compactly supported we can in fact choose $ \rho_i $s to also be compactly supported. In fact we choose $ \rho_i $s to be finite linear combinations of characteristic functions of rectangles.
	
	Moreover we can choose $ T $ large enough so that all of the supports of $ \psi $ and $ \rho_i $'s have empty intersection with $ \mathfrak{S}(-\frac{1}{2}, \frac{1}{2}, T) $. Furthermore in the domain, $ \mathfrak{S}(-\frac{1}{2},\frac{1}{2}, \frac{1}{2}) \setminus\mathfrak{S}(-\frac{1}{2},\frac{1}{2}, T) $, the rate of convergence of $ \mu_k $ is uniform. This means that 
	\[
	\lim_{k\to\infty}|\mu_k(\rho_i) - \mu_k(\psi)| \ll \|\rho_i- \psi\|_1,
	\]
	for every $ i $. From Theorems \ref{Theorem "QUE Holomorphic Vertical"} and \ref{Theorem "QUE Holomorphic Horizontal"}, it follows that
	\[
	\lim_{k\to\infty} \mu_k(\rho_i) = \frac{3}{\pi} \int\limits_{\mathfrak{X}} \rho_i dz.
	\]
	Finally we get
	\begin{align*}
		\lim_{k\to\infty} \mu_k(\psi) &= \lim_{i\to\infty} \lim_{k\to\infty} \mu_k(\rho_i)\\
		&= \lim_{i\to\infty} \frac{3}{\pi} \int\limits_{\mathfrak{X}} \rho_i dz\\
		&= \frac{3}{\pi} \int\limits_{\mathfrak{X}} \psi(z) dz,  
	\end{align*}
completing the proof of Theorem \ref{Theorem "QUE holomorphic"}.
\end{proof}

\section{Proofs of Corollaries}\label{Section "Proof of Corollaries"}

\subsection{Lehmer's conjecture}

Suppose $ f_k $ is a Hecke eigenform and $ p $ a prime such that $ a_k(p)=0 $. Fix a $ T > 0 $ and consider the Hecke relation
\[
\frac{1}{p} \sum_{m=0}^{p-1} f_k\left(\frac{m}{p} + \frac{iT}{p}\right) = a_k(p) f_k(p) - p^{k-1} f_k (ipT).
\]
This gives us
\[
\left|\frac{1}{p} \sum_{m=0}^{p-1} f_k\left(\frac{m}{p} + \frac{iT}{p}\right)\right|^2 T^{k-2} = p^{2k-2} |f_k(ipT)|^2 T^{k-2}.
\]

For large enough $ k $ we can rewrite the right hand side above as 
\[
\left|f_k\left(\frac{iT}{p}\right)\right|^2 T^{k-2} + o_T(1) = p^{2k-2}|f_k(ipT)|^2T^{k-2}.
\]
Rewriting this in terms of $ I_k(T) $, we see that
\[
\frac{3p^2}{\pi T^2} + o_T(1) = \frac{3}{\pi T^2},
\]
which is a contradiction for large enough $ k $. This proves Corollary \ref{Corollary "Lehmer"}.

\subsection{Mean Values in Short Intervals}\label{Section "Mean Values in short intervals"}

Continuing the discussion from \S \ref{Section "Description of the method"} and \eqref{Equation "I_k(T) definition 2"} the kernel function in the case of holomorphic cusp forms is the incomplete $ \Gamma $ function. More precisely, we have  $ \Gamma(k-1, 4\pi n T) $ where $ k,T $. It is clear that as $ n\to\infty $, $ \Gamma(k-1,4\pi n T) $ decays exponentially after a stage. But the stage after which the exponential decay occurs is dependent on $ k $ and it increases with $ k $. Therefore the behavior of the incomplete $ \Gamma $ function at the `transition stage', that is $ k \sim 4\pi n T $ becomes important for us.

For every $ k $, we choose a cut off $ \sigma_k $ such that for $ n > \sigma_k $, the behaviour of $ \Gamma(k-1,4\pi n T) $ is better approximated by an exponentially decaying function, and for $ n \leq \sigma_k $, the behaviour of $ \Gamma(k-1, 4\pi nT) $ is better approximated by $ (k-2)! p(k) $ where $ p(x) $ is a polynomial of degree $ k-1 $. The precise relation is given in \eqref{Equation "Incomplete Gamma formula"}. Therefore for $ n > \sigma_k $ the exponential decay of the incomplete $ \Gamma $ function ``cancels out" the polynomial growth of the Fourier coefficients and we can deduce that a certain tail will be negligible as $ k\to\infty $. The cut off $ \sigma_k $ will be of size comparable to $ k/4\pi T $.

We shall follow the notation as in \S \ref{Section "Proof of Theorem 1"} . We start with \eqref{Equation "I_k(T) definition 2"} and divide the summation into two parts. We fix a $ \delta > 0 $ throughout this section.
	\begin{equation}\label{Equation "I_k(T) = M_k(T) + E_k(T)"}
		I_k(T) = M_k(T) + E_k(T),
	\end{equation}
	where
	\begin{align}
		M_k(T) &= \frac{1}{\|f_k\|^2}\frac{1}{(4\pi)^{k-1}} \sum_{n=1}^{(k+k^{1/2+\delta})/(4\pi T)} |\lambda_k(n)|^2 \Gamma(k-1, 4\pi nT),\label{Equation "S_1 definition"}\\
		E_k(T) &= \frac{1}{\|f_k\|^2}\frac{1}{(4\pi)^{k-1}} \sum_{n=(k+k^{1/2+\delta})/(4\pi T)+1}^{\infty} |\lambda_k(n)|^2 \Gamma(k-1, 4\pi nT).\label{Equation "S_2 definition"}
	\end{align}	
	We shall estimate the sums one by one. The term $ M_k(T) $ will be the ``main term" and $ E_k(T) $ will be the ``error term".
		
	We first show that the contribution from $ E_k(T) $ is negligible as $ k\to\infty $. As in the proof of Lemma \ref{Lemma "Gamma function"} we fix an $ \epsilon_1 > 0 $ for the time being. From Deligne's bound on the Fourier coefficients and \eqref{Equation "Bound f_k"}, it follows that,
	\begin{align*}
		\frac{1}{\|f_k\|^2}\frac{1}{(4\pi)^{k-1}} &\sum_{n=\epsilon_1 k/(4\pi T)+1}^{\infty} |\lambda_k(n)|^2 \Gamma(k-1, 4\pi nT) \\
		&\ll \frac{\log(k)}{(k-1)!} \sum_{n >\epsilon_1 k/4\pi T} \tau^2(n) \Gamma(k-1, 4\pi n T)\\
		& \leq \frac{\log(k)}{(k-1)} \sum_{n > \epsilon_1k/4\pi T} n e^{-4\pi nT}\sum_{m=0}^{k-2} \frac{(4\pi nT)^{m}}{m!}\\
		& = \frac{\log(k)}{(k-1)} \sum_{m=0}^{k-2} \frac{(4\pi T)^m}{m!} \sum_{n > \epsilon_1k/4\pi T} n^{m+1} e^{-4\pi nT}.
	\end{align*}
	Since $ m < k $, the terms in the innermost summation is decreasing in $ n $. Therefore the sum can be uniformly bounded above by an integral and we see that the right hand side above is 
	\begin{align*}
		&\ll \frac{\log(k)}{(k-1)} \sum_{m=0}^{k-2} \frac{(4\pi T)^m}{m!} \int\limits_{\epsilon_1k/4\pi T}^{\infty} x^{m+1} e^{-4\pi xT} dx\\
		& = \frac{\log(k)}{(k-1)} \frac{1}{(4\pi T)^2} \sum_{m=0}^{k-2} \frac{1}{m!} \int\limits_{\epsilon_1k}^{\infty} w^{m+1} e^{-w} dw\\
		& = \frac{\log(k)}{(k-1)} \frac{1}{(4\pi T)^2} \sum_{m=0}^{k-2} \frac{1}{m!} \Gamma(m+2,\epsilon_1k).
	\end{align*}
	Treating $ T $ as a constant and using \eqref{Equation "Incomplete Gamma formula"} we can bound the above right hand side by
	\begin{align*}
		&\ll_T \frac{\log(k)}{(k-1)} e^{-\epsilon_1k} \sum_{m=0}^{k-2} (m+1)\sum_{l=0}^{m+1} \frac{(\epsilon_1k)^l}{l!}\\
		&\leq \frac{\log(k)}{(k-1)} e^{-\epsilon_1k} k \sum_{m=0}^{k-2} (m+2) \frac{(\epsilon_1k)^m}{m!}\\
		&\ll \log(k) \left(\frac{e^{-k} k^k}{(k-2)!} \right) \left(\frac{\epsilon_1}{e^{\epsilon_1-1}}\right)^k.
	\end{align*}
	Appealing to Stirling's approximation, the term in the first bracket has at worst polynomial growth in $ k $ and the term in the second bracket has exponential decay in $ k $ as long as $ \epsilon_1 \neq 1 $. Moreover, we can choose $ \epsilon_1 = 1 + k^{-1/2+\delta} $ and the conclusions will hold. Thus $ E_k(T) $ vanishes as $ k\to\infty $.
	
	Now we estimate $ M_k(T) $. Since $ \Gamma(s,x) $ is decreasing in $ x $ for positive values of $ s $, we have
	\begin{align}\nonumber
		M_k(T) = \frac{1}{\|f_k\|^2}\frac{1}{(4\pi)^{k-1}} &\sum_{n=1}^{(k+k^{1/2  + \delta})/(4\pi T)} |\lambda_k(n)|^2 \Gamma(k-1, 4\pi nT)\\
		& < \frac{1}{(4\pi)^{k-1}} \Gamma(k-1, 4\pi T)  \frac{1}{\|f_k\|^2}\sum_{n=1}^{(k+k^{1/2 + \delta})/(4\pi T)} |\lambda_k(n)|^2.
	\end{align}
	
	Similarly for the lower bound we have,
	\begin{multline}
		M_k(T) \geq \frac{1}{\|f_k\|^2}\frac{1}{(4\pi)^{k-1}}\sum_{n=1}^{(k-k^{1/2 + \delta})/(4\pi T)} |\lambda_k(n)|^2 \Gamma(k-1, 4\pi nT) \\
		> \Gamma(k-1, k-k^{1/2+\delta}) \frac{1}{\|f_k\|^2}\frac{1}{(4\pi)^{k-1}} \sum_{n=1}^{(k-k^{1/2 + \delta})/(4\pi T)} |\lambda_k(n)|^2.
	\end{multline}
	
	Now we have
	\begin{multline*}
		\frac{\Gamma(k-1, 4\pi T)}{(4\pi)^{k-1}\|f_k\|^2}\sum_{n=1}^{(k+k^{1/2 - \delta})/(4\pi T)} |\lambda_k(n)|^2 \\
		= \frac{\Gamma(k-1, 4\pi T)}{(4\pi)^{k-1}\|f_k\|^2} \sum_{n=1}^{(k-k^{1/2 - \delta})/(4\pi T)} |\lambda_k(n)|^2  +\\
		\frac{\Gamma(k-1, 4\pi T)}{(4\pi)^{k-1}\|f_k\|^2} \sum_{n=(k-k^{1/2 - \delta})/(4\pi T)}^{(k+k^{1/2 - \delta})/(4\pi T)} |\lambda_k(n)|^2.
	\end{multline*}
	From the Ramanujan bound, the summation in the second term is bounded above by $ k^{1/2 + \epsilon} $ upto some constant independent of $ k $. Also from \eqref{Equation "Bound f_k"}, it follows that the factors outside the summation of the second term decays like $ \log(k)/k^{1/2-\delta} $. Therefore the second term vanishes as $ k\to\infty $. Also from Lemma \ref{Lemma "Gamma function"}, we can replace $ \Gamma(k-1,4\pi T) $ by $ \Gamma(k-1, k-k^{1/2-\delta}) $ as $ k\to\infty $. Combining all this together, we get
	\begin{equation}
		\frac{\Gamma(k-1, 4\pi T)}{(4\pi)^{k-1}\|f_k\|^2}\sum_{n=1}^{(k+k^{1/2 - \delta})/(4\pi T)} |\lambda_k(n)|^2 \sim \frac{\Gamma(k-1, k-k^{1/2-\delta})}{(4\pi)^{k-1}\|f_k\|^2} \sum_{n=1}^{(k-k^{1/2 - \delta})/(4\pi T)} |\lambda_k(n)|^2,
	\end{equation}
	as $ k\to\infty $. It would be convenient to replace $ k-k^{1/2+\delta} $ with $ k $ and it is clear that we can do so.
	
	It follows that 
	\[
	I_k(T) = \frac{\Gamma(k-1, 4\pi T)}{(4\pi)^{k-1}\|f_k\|^2} \sum_{n=1}^{k/4\pi T} |\lambda_k(n)|^2 + o(1).
	\]
	The summation on the right hand side can be written as 
	\[
	\sum_{n=1}^{k/4\pi T} |\lambda_k(n)|^2 = L(1,\sym^2 f_k) \frac{k}{4\pi T} + \mathcal{E}_k,
	\]
	where $ \mathcal{E}_k $ denotes some error term. Substituting this into the earlier equation we have 
	\[
	I_k(T) = \frac{3}{\pi T}\left(1+o(1)\right) + \frac{\mathcal{E}_k}{L(1,\sym^2 f_k) k} + o(1).
	\]
	From Theorem \ref{Theorem "QUE holomorphic"}, it follows that 
	\[
	\mathcal{E}_k = o(kL(1,\sym^2 f_k)),
	\]
	thus completing the proof of Corollary \ref{Corollary "Mean Values"}.

\subsection{A Slight Generalization of Rudnick's Theorem}\label{Subsection "Generalization of Theorem 1"}

We make the crucial observation that following along the same lines as in the proof of Theorem \ref{Theorem "QUE holomorphic"} that we can prove
\begin{multline}\label{Equation "Limit Orthogonality"}
	\lim_{k\to\infty} \frac{1}{\|f_k^{(1)}\|^2} \int\limits_{\mathfrak{F}} f_k^{(1)}(z) \overline{f_k^{(2)}(z)}\psi(z) dz = \lim_{k\to\infty} \frac{1}{\|f_k^{(2)}\|^2} \int\limits_{\mathfrak{F}} f_k^{(1)}(z) \overline{f_k^{(2)}(z)}\psi(z) dz\\
	= \lim_{k\to\infty} \frac{1}{\|f_k^{(1)}\|\|f_k^{(2)}\|} \int\limits_{\mathfrak{F}} f_k^{(1)}(z) \overline{f_k^{(2)}(z)}\psi(z) dz = 0,
\end{multline}
for any two normalized Hecke eigen forms $ f_k^{(1)} $ and $ f_k^{(2)} $ of weight $ k $ and $ \psi\in C_c^\infty(\mathfrak{X}) $. The proof of \eqref{Equation "Limit Orthogonality"} is very similar to the proof of Theorem \ref{Theorem "QUE holomorphic"}. The only point of deviation is in the computation of the constant at the end of proof of Theorem \ref{Theorem "QUE Holomorphic Vertical"} which will turn out to be zero in this case.
Suppose that $ F_k $ is a sequence of admissible forms with notations as in \S \ref{Section "Introduction"}. It follows that
\[
\frac{1}{\|F_k\|^2} \langle \psi y^{k/2}F_k, y^{k/2}F_k\rangle_{\X} = \frac{1}{\|F_k\|^2} \sum_{i=1}^{j_k}\sum_{l=1}^{j_k} \alpha_k^{(i)}\overline{\alpha_k^{(l)}} \langle \psi y^{k/2}f_k^{(i)}, y^{k/2}f_k^{(l)}\rangle_{\X}.
\]
From \eqref{Equation "Limit Orthogonality"} and \eqref{Equation "Bound admissible form"} it follows that the off diagonal terms, that is, the terms corresponding to $ i\neq l $ in the previous summation all vanish as $ k\to\infty $. Therefore we have
\begin{align*}
	\lim_{k\to\infty}\frac{1}{\|F_k\|^2} \langle \psi y^{k/2}F_k, F_k\rangle_{\X} &= \frac{1}{\|F_k\|^2} \sum_{i=1}^{j_k} |\alpha_k^{(i)}|^2 \langle \psi y^{k/2}f_k^{(i)}, y^{k/2}f_k^{(i)}\rangle_{\X}\\
	&=\frac{1}{\|F_k\|^2} \sum_{i=1}^{j_k} |\alpha_k^{(i)}|^2 \|f_k^{(i)}\|^2\left( \frac{1}{\|f_k^{(i)}\|^2}\langle \psi y^{k/2}f_k^{(i)}, y^{k/2}f_k^{(i)}\rangle_{\X}\right).
\end{align*}
Applying Theorem \ref{Theorem "QUE holomorphic"} for the bracketed terms on the right hand side and using \eqref{Equation "Admissible norm"} we see that Corollary \ref{Corollary "Rudnick"} follows.

It was Rudnick who first observed that the equidistribution of mass of holomorphic forms would lead to the equidistribution of the zeroes of the forms themselves \cite{Rudnick - Distribution of zeroes}. Following along the same arguments we are able to prove the following corollaries.

\begin{corollary}\label{Corollary "Admissible zeroes"}
	The zeroes of an admissible sequence of cusp forms equidistribute with respect to $ dz $ as $ k\to\infty $.
\end{corollary}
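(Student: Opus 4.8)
The plan is to run Rudnick's logarithmic potential argument \cite{Rudnick - Distribution of zeroes}, using Corollary \ref{Corollary "Rudnick"} in place of the Hecke eigenform QUE. Fix an admissible sequence $F_k\in S_k(SL(2,\Z))$ and let $\nu_k := \frac{12}{k}\sum_{F_k(\rho)=0} m_\rho\,\delta_\rho$ be the normalized zero counting measure on $\X$ (the sum being over the zeros of $F_k$ in $\mathfrak F$, with the standard conventions at the elliptic points $i$ and $e^{2\pi i/3}$). Since a cusp form is non-vanishing for $\Im(z)$ large, all zeros lie in a bounded part of $\mathfrak F$; moreover every $\psi\in C_c^\infty(\X)$ is supported away from the cusp, so the argument lives on compact subsets of $\X$ and the cusp plays no role. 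Introduce the normalized potential
\[
w_k(z) := \frac{1}{k}\log\left(\frac{y^k|F_k(z)|^2}{\|F_k\|^2}\right) = \log y + \frac{2}{k}\log|F_k(z)| - \frac{2}{k}\log\|F_k\|,
\]
which is $SL(2,\Z)$-invariant and satisfies $e^{kw_k}\,dz = \|F_k\|^{-2} y^k|F_k(z)|^2\,dz$. Since $\log|F_k|$ is subharmonic, $w_k - \log y$ is subharmonic on $\h$, and the Poincar\'e--Lelong formula together with $\Delta_0\log y = -y^{-2}$ (where $\Delta_0 = \partial_x^2 + \partial_y^2$) gives, as distributions on $\X$,
\[
\nu_k = \frac{3}{\pi}\,\Delta_0 w_k + \frac{3}{\pi}\,dz.
\]
Hence the corollary is equivalent to the statement $\Delta_0 w_k\to 0$ in $\mathcal{D}'(\X)$, and for this it suffices to prove $w_k\to 0$ in $L^1_{\mathrm{loc}}(\X)$.

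Two ingredients feed into this. \emph{(Upper bound.)} A standard estimate — e.g. the bound $y^k|g(z)|^2\ll k\,\|g\|^2$ valid uniformly for every $g\in S_k(SL(2,\Z))$, which follows from the corresponding bound on the diagonal of the Bergman kernel, or from Deligne's bound on the Fourier coefficients of the eigenform components of $F_k$ together with the Hoffstein--Lockhart bound \eqref{Equation "Bound f_k"} on the Petersson norms — gives $w_k\le (\log k + O(1))/k\to 0$ uniformly on $\X$; in particular $\limsup_{k\to\infty}\sup_K w_k\le 0$ for every compact $K\subset\X$. \emph{(Equidistribution of mass.)} By Corollary \ref{Corollary "Rudnick"}, $e^{kw_k}\,dz\rightharpoonup\frac{3}{\pi}\,dz$ in the weak-$*$ topology against $C_c^\infty(\X)$.

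Now invoke the compactness theorem for subharmonic functions applied to $\{w_k - \log y\}$, which is subharmonic and, by the upper bound, locally uniformly bounded above. Along any subsequence, either $w_k\to -\infty$ locally uniformly on $\h$, or a further subsequence $w_{k_j}$ converges in $L^1_{\mathrm{loc}}$ to a limit $w_\infty$ with $w_\infty - \log y$ subharmonic (so $w_\infty$ is upper semicontinuous and $SL(2,\Z)$-invariant). The first alternative forces $e^{kw_k}\,dz\to 0$ on every ball, contradicting the equidistribution of mass, so it cannot occur. In the second, the upper bound gives $w_\infty\le 0$ a.e.; and if $w_\infty < -\varepsilon$ on some nonempty open subset of $\X$, then choosing a small ball $B$ there and applying Hartogs' lemma gives $\sup_B w_{k_j} < -\varepsilon/2$ for $j$ large, whence $\int_\X\psi\, e^{k_j w_{k_j}}\,dz\le e^{-k_j\varepsilon/2}$ for any $0\le\psi\le 1$ supported in $B$ — contradicting Corollary \ref{Corollary "Rudnick"} as soon as $\psi$ is chosen smooth with $\int_\X\psi\,dz > 0$. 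Therefore $w_\infty = 0$ a.e., so $\Delta_0 w_\infty = 0$ and $\nu_{k_j}\to\frac{3}{\pi}\,dz$ in $\mathcal{D}'(\X)$. Since the limit does not depend on the subsequence, $\nu_k\to\frac{3}{\pi}\,dz$; this is the asserted equidistribution of the zeros of $F_k$ (and, specializing $F_k$ to Hecke eigenforms, it recovers Rudnick's theorem).

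The main obstacle is precisely the passage from weak-$*$ convergence of the $L^2$ masses $e^{kw_k}\,dz$ — the only input QUE supplies — to $L^1_{\mathrm{loc}}$ convergence of the normalized logarithmic potentials $w_k$: weak convergence of $e^{kw_k}$ gives no direct handle on $\frac{1}{k}\log$ of it, because the regions on which $w_k$ could be strongly negative are free to drift with $k$. It is the rigidity of the (quasi-)subharmonic family $\{w_k\}$ — the compactness theorem and Hartogs' lemma — that pins these regions down, after which the ``no escape of mass'' content of Corollary \ref{Corollary "Rudnick"} closes the argument.
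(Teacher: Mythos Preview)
Your proposal is correct and is precisely the approach the paper indicates: the paper does not spell out a proof but simply says that, following Rudnick's logarithmic potential argument \cite{Rudnick - Distribution of zeroes} with Corollary \ref{Corollary "Rudnick"} replacing the Hecke eigenform QUE input, the equidistribution of zeros follows. You have written out that argument in detail, and the steps (Poincar\'e--Lelong, the uniform sup-norm bound, compactness of subharmonic families, and Hartogs' lemma to rule out mass collapse) are the ones in Rudnick's paper.
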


\begin{corollary}\label{Corollary "Linear combination zeroes"}
	Given a set of non zero complex numbers $ \{\beta_1,\ldots, \beta_j\} $, the solution of the equation 
	\[
	\sum_{i=1}^{j} \beta_i f_k^{(i)}(z) = 0
	\]
	equidistribute with respect to $ dz $ for any choice of normalized eigenforms $ f_k^{(i)} $s.
\end{corollary}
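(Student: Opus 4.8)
The plan is to reduce the statement to the $L^2$‑mass equidistribution for the single sequence $F_k := \sum_{i=1}^{j}\beta_i f_k^{(i)}$ and then to run the potential‑theoretic argument of Rudnick \cite{Rudnick - Distribution of zeroes}. For the first step, fix $k$ large enough that $j_k\ge j$ and regard $F_k$ as an element of $S_k(SL(2,\Z))$ with expansion coefficients $\alpha_k^{(i)}=\beta_i$ for $1\le i\le j$ and $\alpha_k^{(i)}=0$ otherwise. Orthogonality of the Hecke eigenbasis gives $\|F_k\|^2=\sum_{i=1}^{j}|\beta_i|^2\|f_k^{(i)}\|^2$, so $\|F_k\|^2\gg\|f_k^{(i)}\|^2$ for each $1\le i\le j$ with implied constant independent of $k$ (the $\beta_i$ being fixed and nonzero). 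Expanding $\langle \psi\, y^{k/2}F_k,\, y^{k/2}F_k\rangle_{\X}$ bilinearly, the off‑diagonal terms vanish after division by $\|F_k\|^2$ by \eqref{Equation "Limit Orthogonality"}, while the diagonal terms are evaluated by Theorem \ref{Theorem "QUE holomorphic"}; this is precisely the computation of \S\ref{Subsection "Generalization of Theorem 1"} with finitely many nonzero $\alpha_k^{(i)}$, so $\|F_k\|^{-2}\,y^k|F_k(z)|^2\,dz$ converges weak‑$*$ to $\tfrac{3}{\pi}\,dz$; that is, QUE holds for $\{F_k\}$.

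For the second step I would follow \cite{Rudnick - Distribution of zeroes}. By the valence formula $F_k$ has at most $k/12$ zeros in $\mathfrak{F}$, so the normalized zero‑counting measure $\nu_k:=\tfrac{12}{k}\sum_{F_k(\rho)=0}\delta_\rho$ has total mass $\le 1$, and it suffices to show $\nu_k\to\tfrac{3}{\pi}\,dz$ weak‑$*$. Put $\psi_k:=\tfrac1k\log\big(y^{k/2}|F_k(z)|/\|F_k\|\big)$, a function on $\mathfrak{X}$ away from the zeros. The Poincaré--Lelong identity reads, on the interior of $\mathfrak{F}$ and in the distributional sense,
\[
\Delta_{\mathrm{euc}}\log\big(y^{k/2}|F_k(z)|\big)=2\pi\sum_{F_k(\rho)=0}\delta_\rho-\frac{k}{2y^2},
\]
whence $\nu_k=\tfrac{6}{\pi}\,(\Delta_{\mathrm{euc}}\psi_k)\,dx\,dy+\tfrac{3}{\pi}\,dz$, the Dirac masses and the Laplacian being taken relative to $dx\,dy$. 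Thus $\nu_k\to\tfrac{3}{\pi}\,dz$ will follow as soon as $\psi_k\to 0$ in the sense of distributions. Deligne's bound together with $\|F_k\|\gg\|f_k^{(i)}\|$ yields the uniform upper estimate $y^{k/2}|F_k(z)|/\|F_k\|\ll k^{1/4}$, hence $\psi_k\le o(1)$ pointwise; the matching lower bound in $L^1_{\mathrm{loc}}$ (equivalently, that $y^{k/2}|F_k|$ is not too small on a set of almost full measure) comes from the mass equidistribution of the first step, the subharmonicity of $\tfrac1k\log|F_k|$, and the sparsity of the zeros, exactly as in \cite{Rudnick - Distribution of zeroes}. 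The contribution of a neighbourhood of the cusp is negligible by the no‑escape‑of‑mass content of Theorem \ref{Theorem "QUE Holomorphic Vertical"} applied to $F_k$, which also shows $\nu_k$ is asymptotically a probability measure.

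The main obstacle is the second step, and more precisely checking that the hypotheses of Rudnick's argument survive the passage from a single Hecke eigenform to the linear combination $F_k$: the uniform sup‑norm bound for $y^{k/2}|F_k|/\|F_k\|$ and the $L^1$ control of $\log\big(y^{k/2}|F_k|\big)$ near the zeros and near the cusp. Each of these reduces to the mass equidistribution of the first step and to elementary estimates (triangle inequality, Deligne's bound, $\|F_k\|\gg\|f_k^{(i)}\|$), so no idea beyond \cite{Rudnick - Distribution of zeroes} is needed. Corollary \ref{Corollary "Admissible zeroes"} is proved in exactly the same way, being the case where $\{F_k\}$ is an admissible sequence in the sense of Definition \ref{Definition "Admissible forms"}, so that the first step is literally Corollary \ref{Corollary "Rudnick"}.
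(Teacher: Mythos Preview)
Your proposal is correct and follows the same two-step route the paper intends: first establish mass equidistribution for the linear combination $F_k=\sum_{i=1}^{j}\beta_i f_k^{(i)}$ via the bilinear expansion, \eqref{Equation "Limit Orthogonality"} for the off-diagonal terms, and Theorem~\ref{Theorem "QUE holomorphic"} for the diagonal, and then feed this into Rudnick's potential-theoretic argument \cite{Rudnick - Distribution of zeroes}. The paper itself gives no details beyond the sentence ``Following along the same arguments we are able to prove the following corollaries,'' so your write-up is in fact considerably more explicit than the paper's, and your observation that the fixed, finite number $j$ of summands makes the off-diagonal control here strictly easier than in the admissible case (where $j_k$ grows) is a worthwhile clarification.
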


\begin{corollary}\label{Corollary "Powers are not admissible"}
	For a given cusp form $ f $, the sequence $ f^n $ is not an admissible sequence.
\end{corollary}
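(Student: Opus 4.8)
The plan is to argue by contradiction using Corollary \ref{Corollary "Rudnick"}, exploiting the elementary fact that raising a fixed bounded density to the $n$-th power produces a measure that concentrates rather than equidistributes. Fix a nonzero cusp form $f\in S_k(SL(2,\Z))$ and set $g(z):=y^k|f(z)|^2$, which is a well-defined, continuous, non-negative, $SL(2,\Z)$-invariant function on $\mathfrak{X}$. The key observation is that $y^{nk}|f^n(z)|^2=g(z)^n$, so the measure attached to the weight-$nk$ cusp form $f^n$ is
\[
\mu_{f^n}(\psi)=\frac{\displaystyle\int_{\mathfrak{X}} g(z)^n\,\psi(z)\,dz}{\displaystyle\int_{\mathfrak{X}} g(z)^n\,dz},
\]
i.e. the $n$-th power normalization of the fixed density $g$ against the fixed finite measure $dz$ on $\mathfrak{X}$.

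First I would record the needed properties of $g$. Since $f$ is cuspidal, $y^{k/2}|f(z)|$ is bounded on $\mathfrak{X}$ and decays to $0$ in the cusp; hence $g$ is bounded, its supremum $M:=\sup_{\mathfrak{X}}g>0$ is attained on a nonempty set, and $g$ is non-constant because it takes values arbitrarily close to $0$ near the cusp. Consequently, for each $\delta>0$ the set $V_\delta:=\{z\in\mathfrak{X}: g(z)>M-\delta\}$ is nonempty and open, hence of positive hyperbolic volume, and there is a point $z_1$ with $g(z_1)<M$ together with a neighbourhood $U_1$ of $z_1$ and an $\eta>0$ with $g<M-\eta$ on $U_1$.

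Now suppose for contradiction that $\{f^n\}_{n\ge 1}$ is an admissible sequence (of cusp forms of weight $nk\to\infty$). By Corollary \ref{Corollary "Rudnick"}, for every $\psi\in C_c^\infty(\mathfrak{X})$ we would have $\lim_{n\to\infty}\mu_{f^n}(\psi)=\tfrac{3}{\pi}\int_{\mathfrak{X}}\psi\,dz$. Choose $\psi\ge 0$ supported in $U_1$ with $\int_{\mathfrak{X}}\psi\,dz>0$, and fix $0<\delta<\eta$. Then
\[
\int_{\mathfrak{X}} g^n\psi\,dz\le (M-\eta)^n\int_{\mathfrak{X}}\psi\,dz,
\qquad
\int_{\mathfrak{X}} g^n\,dz\ge (M-\delta)^n\,\mathrm{vol}(V_\delta),
\]
so $\mu_{f^n}(\psi)\le\left(\tfrac{M-\eta}{M-\delta}\right)^n\tfrac{\|\psi\|_1}{\mathrm{vol}(V_\delta)}\to 0$ as $n\to\infty$, contradicting $\tfrac{3}{\pi}\int_{\mathfrak{X}}\psi\,dz>0$. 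Hence $\{f^n\}$ is not admissible.

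There is essentially no hard analytic step here: the whole content is the concentration of $g^n/\|g^n\|_1$ near the maximum locus of $g$. The only points requiring a little care are verifying that $g$ is genuinely non-constant with an attained maximum — immediate from cuspidality (decay in the cusp) and $f\not\equiv 0$ — and that the test function can be taken non-negative with positive mass yet supported where $g$ stays bounded away from $M$; both follow from continuity of $g$ and finiteness of $\mathrm{vol}(\mathfrak{X})$.
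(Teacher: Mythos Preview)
Your argument is correct. You reduce the question to a Laplace-type concentration statement: since $y^{nk}|f^n|^2=g^n$ for the fixed bounded non-constant density $g=y^k|f|^2$, the normalized measures $g^n\,dz/\!\int g^n\,dz$ concentrate near the (attained) maximum of $g$, and in particular assign vanishing mass to any test function supported where $g$ is bounded strictly below its maximum; this contradicts the equidistribution coming from Corollary~\ref{Corollary "Rudnick"}. The only mild care needed---that $g$ attains its supremum (cuspidality forces $g\to 0$ at the cusp, so the sup is attained on a compact piece), that $V_\delta$ has positive volume (openness and nonemptiness), and that one may choose $\psi\in C_c^\infty(\mathfrak X)$ supported in a compact $U_1\subset\{g<M-\eta\}$---is all handled as you indicate.

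The paper's route is different: it groups this corollary with Corollaries~\ref{Corollary "Admissible zeroes"} and~\ref{Corollary "Linear combination zeroes"}, deducing it from Rudnick's principle that mass equidistribution forces equidistribution of zeros. If $\{f^n\}$ were admissible, Corollary~\ref{Corollary "Admissible zeroes"} would force the zeros of $f^n$ to equidistribute as $n\to\infty$; but the zero divisor of $f^n$ is supported on the fixed finite zero set of $f$ for every $n$, so the associated normalized zero measures are all the same discrete measure and certainly do not converge to $(3/\pi)\,dz$. Your approach bypasses the zero-distribution machinery entirely and argues directly at the level of the mass, which is a bit more elementary and self-contained; the paper's approach, on the other hand, makes the obstruction visibly geometric (the zeros never move).
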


\subsection{Potential Gradient on $ \mathfrak{X} $}\label{Subsection "Potential Gradient on X"}

In this section we prove Corollary \ref{Corollary "Mass distribution holomorphic"}. But before we do so, we shall give an interpretation in terms of a quantum particle $ p $. Suppose that $ \mathfrak{X} $ is an ambient space in which $ p $ is present in. The state of $ p $ is given by a unit vector in $ L^2(\mathfrak{X}) $. In analogy to the Schr\"odinger equation, we can consider the state of $ p $ as $ \phi_v $ as above. The energy of $ p $ is measured by the size of $ v $. The analogy may be extended in principle to holomorphic forms, in which case the weight $ k $ measures the energy of the particle.

The probability of finding $ p $ in a region $ R\subset \mathfrak{X} $ is given by $ \int_R |\phi_v|^2 dz $. In this interpretation, Corollary \ref{Corollary "Mass distribution holomorphic"} implies that if the energy of $ p $ is bounded the probability of finding $ p $ decays exponentially in $ T $ as we move towards the cusp at infinity. In other words there seems to be a potential gradient ``pulling down" on the particle $ p $. The more energy the particle has, the farther away it can move against the potential gradient.

\begin{proof}[Proof of Corollary \ref{Corollary "Mass distribution holomorphic"}]
	We shall directly prove the second inequality in the statement of Corollary \ref{Corollary "Mass distribution holomorphic"}. Fix $ k $ and $ f_k $ as before and suppose that $ T\geq 4k\log(k) $. Then it follows that for every $ x > 4\pi T $, we have $ x^{2k-2} e^{-x} \leq e^{-x/2} $. Therefore we have $ \Gamma(k-1, 4\pi n T) \leq 2 e^{-2\pi n T}$. Plugging this into \eqref{Equation "I_k(T) definition 2"} we get
	\begin{align*}
		\mu_k\left(\mathfrak{S}\left(-\frac{1}{2},\frac{1}{2},T\right)\right) &= \frac{1}{\|f_k\|^2}\frac{1}{(4\pi)^{k-1}} \sum_{n=1}^{\infty} |\lambda_k(n)|^2 \Gamma(k-1, 4\pi nT)\\
		&\leq \frac{2}{\|f_k\|^2}\frac{1}{(4\pi)^{k-1}} \sum_{n=1}^{\infty} |\lambda_k(n)|^2 e^{-2\pi n T}\\
		&\leq \sum_{n=1}^{\infty}n e^{-2\pi n T}\\
		&\leq \frac{e^{-2\pi T}}{2\pi T}.
	\end{align*}
	This proves the corollary.
\end{proof}

\section{Concluding Remarks}\label{Section "Concluding remarks"}

\begin{remark}
	The proof of Theorem \ref{Theorem "QUE holomorphic"} can easily be extended to forms of higher level. Suppose that $ f_k $'s are a sequence of cuspidal Hecke eigenforms of level $ N > 1 $ and of weight $ k $. The only point of deviation would be in the choice of $ (p,q) $ as in the proof of Theorem \ref{Theorem "QUE Holomorphic Vertical"}. We need some criterion to distinguish primes which can be written of the form $ c^2N^2 + d^2 $ for two integers $ c,d $. Using standard arguments from class field theory and in particular an effective version of the Chebatrov density theorem, we can show that there is a positive proportion of primes which can be represented by the above quadratic form. Therefore arguing along the same lines as in \cite{Micholson}, it also follows that the quotient of those primes are dense in $ \R $. Now the proof of Theorem \ref{Theorem "QUE Holomorphic Vertical"} can be suitably modified.
\end{remark}

\begin{remark}\label{Remark "Maass forms"}
	Extending these methods for the case of Maass forms would probably require a careful analysis of the $ K $-Bessel functions (similar to our considerations of the incomplete $ \Gamma $ function). Nevertheless the second part of the proof of Theorem \ref{Theorem "QUE holomorphic"}, in particular deducing the vertical equidistribution from a suitble form of horizontal equidistribution and the deduction of the QUE conjecture thereon, is applicable for the case of Maass forms as well.
\end{remark}

\begin{remark}\label{Remark "Ramanujan"}
	It is clear from the proof that we do not require the full strength of the Ramanujan bound. It is enough that we have a smaller power saving. For example, a bound of the form $ a_k(n) \ll n^{\frac{k}{2} - \frac{3}{8} - \frac{1}{8\pi}} $ should suffice. More importantly, the bound should be uniform in $ k $.
\end{remark}

\begin{remark}\label{Remark "Varying level"}
	It would be interesting to find an analogue of the present arguments to prove the ``level equidistribution" result of Nelson \cite{Nelson - QUE varying level}.
\end{remark}

\begin{remark}\label{Remark "Hoffstein Lockhart"}
	If $ f_k $ is a Hecke eigenform, the trivial lower bound on $ L(1,\sym^2 f_k) $ is of the size $ k^{-\epsilon} $ for an arbitrarily small positive quantity $ \epsilon $. We remark here that this would suffice for our purpose of bounding the error term in the horizontal equidistribution.
\end{remark}

\begin{remark}
	The author hopes that the method of proof will be extended to other surfaces which do not have an arithmetic nature, in particular to those surfaces which do not possess an action of a Hecke ring, since we have very minimally used the property that the forms considered are Hecke eigenforms.
\end{remark}

\begin{remark}
	As observed above, suconvexity bounds for the relevant $ L $ functions would prove the QUE conjecture. More precisely, improvements towards the Lindel\"off hypothesis would lead to faster rates of convergence.
\end{remark}

\begin{remark}
	It seems that a similar argument can be used to prove the QUE conjecture for half integral weight modular forms and is currently under consideration by the author.
\end{remark}

\begin{remark}
	It would be quite interesting to give an effective version of Corollary \ref{Corollary "Lehmer"}.
\end{remark}

\end{document}